\pdfoutput=1

\documentclass{scrartcl}
	\usepackage{amsmath, amsfonts, amssymb, amsthm, graphicx, mathrsfs, mathtools, bm}

	
\usepackage[utf8]{inputenc}
\usepackage[T1]{fontenc}
\usepackage{lmodern}
\usepackage[english]{babel}	
\usepackage{graphicx}
\graphicspath{{./Immagini/}}

\usepackage[font={small,it}, labelfont=bf]{caption} 
\usepackage{subcaption}
\usepackage{comment}
\usepackage{xspace}


\usepackage[pdftex, colorlinks=true, pdfstartview=FitV, linkcolor=blue, citecolor=blue, urlcolor=blue,p lainpages=false,bookmarks,final]{hyperref}
\hypersetup{
  bookmarksopen, 
  bookmarksopenlevel=2, 
  pdftitle={Holomorhic curves in Shimura varieties},
  pdfauthor={Michele Giacomini},
  }

\usepackage{xfrac}
\usepackage[usenames,dvipsnames]{color}
\usepackage{enumerate}
\usepackage{csquotes}
\usepackage[
style = alphabetic,%
sorting=nyt,maxnames=5,maxalphanames=5,%
firstinits=true,    
isbn=false, doi=false, url=false
]{biblatex}

\AtEveryBibitem{%
\clearfield{note}
\clearfield{edition}
\clearlist{location}
}

\usepackage{tikz}
\usetikzlibrary{bending}
\usetikzlibrary{arrows.meta}

\title{Holomorphic curves in Shimura varieties}
\author{Michele~Giacomini}

\date{}

\pagestyle{headings}


\newcommand{\R}{\mathbb{R}}

\newcommand{\Z}{\mathbb{Z}}

\newcommand{\Q}{\mathbb{Q}}
\newcommand{\N}{\mathbb{N}}

\newcommand{\End}{\mathrm{End}}




\newcommand{\de}{\partial}



\newcommand{\abs}[1]{\left\vert #1\right\vert}


\renewcommand{\bar}[1]{\overline{#1}}


\DeclareMathOperator{\im}{Im}
\DeclareMathOperator{\re}{Re}

\DeclareMathOperator{\alg}{alg}

\DeclareMathOperator{\Vol}{Vol}
\DeclareMathOperator{\Zar}{Zar}



\newtheoremstyle{break}
  {\topsep}{\topsep}%
  {\itshape}{}%
  {\bfseries}{}%
  {\newline}{}%


\theoremstyle{definition}
\newtheorem{definition}{Definition}[section]
\theoremstyle{plain}
\newtheorem{theorem}[definition]{Theorem}
\newtheorem*{theorem*}{Theorem}   

\newtheorem{lemma}[definition]{Lemma}
\newtheorem{proposition}[definition]{Proposition}

\theoremstyle{remark}

\newtheorem{example}[definition]{Example}

\theoremstyle{break}


\numberwithin{equation}{section}


\DefineBibliographyStrings{english}{%
  bibliography = {References},
}

\addbibresource{Bibliography.bib}

\begin{document}

\maketitle

\begin{abstract}
We prove an hyperbolic analogue of the Bloch-Ochiai theorem about the Zariski closure of holomorphic curves in abelian varieties. We consider the case of non compact Shimura varieties completing the proof of the result for all Shimura varieties. The statement analysed was first formulated and proven by Ullmo and Yafaev for compact Shimura varieties.
\end{abstract}

\tableofcontents


\section{Introduction} 
\label{sec:introduction}

The Bloch-Ochiai theorem \autocite[cf.][Chapter 9, 3.9.19]{Kobayashi98} states that the Zariski closure of an holomorphic curve in an abelian variety is a coset of an abelian subvariety.

\begin{theorem}[Bloch-Ochiai]
	\label{thm:Bloch-Ochiai}
	Let $A$ be an abelian variety and $f:\mathbb{C}\to A$ a non-constant holomorphic map. Then the Zariski closure of $f(\mathbb{C})$ is a translate of an abelian subvariety.
\end{theorem}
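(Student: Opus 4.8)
The plan is to study the Zariski closure $X := \overline{f(\mathbb{C})}$ of the image directly. Since $\mathbb{C}$ is irreducible and $f$ is continuous, $X$ is an irreducible closed subvariety of $A$, and after a translation we may assume $0 \in X$. Let $B \subseteq A$ be the identity component of the stabilizer $\{\, a \in A : a + X = X \,\}$; it is an abelian subvariety, $B + X = X$ (so $B \subseteq X$), and the quotient morphism $\pi : A \to A/B$ carries $X$ onto a subvariety $Y := \pi(X)$ with finite stabilizer, while $\bar{f} := \pi \circ f : \mathbb{C} \to A/B$ still has Zariski-dense image in $Y$. If $Y$ were a single point, $X$ would be contained in the fibre $\pi^{-1}(0) = B$, forcing $X = B$; so it suffices to prove $\dim Y = 0$. (This reduction is Ueno's fibration theorem for subvarieties of abelian varieties.)

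Assume for contradiction that $\dim Y \geq 1$. By Ueno's theorem, a subvariety of an abelian variety with finite stabilizer is of general type, so a smooth projective model $\widetilde{Y}$ of $Y$ has big canonical bundle; in particular, by the Ueno--Kawamata analysis of pluricanonical, more generally jet-differential, sheaves on general-type subvarieties of abelian varieties, there exist nonzero global jet differentials on $\widetilde{Y}$ vanishing along an ample divisor $D$. The point of the argument is to make the entire curve $\bar{f}$ visible to these differentials: because $A/B$ is a complex torus, its jet bundles $J_k(A/B)$ are canonically trivial, so lifting $\bar{f}$ to the universal cover and differentiating yields, for every $k$, a holomorphic jet lift $j_k\bar{f} : \mathbb{C} \to J_k(A/B)$ whose image lands in the Zariski closure of the $k$-jets of $Y$ (and which lifts to $\widetilde{Y}$ since $\mathbb{C}$ is simply connected).

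The analytic core --- and the step I expect to be the main obstacle --- is the vanishing of these jet differentials along $\bar{f}$. Writing a jet differential $P$ vanishing on $D$ in the above trivialization, $P(j_k\bar{f})$ is a holomorphic function on $\mathbb{C}$ whose zeros include $\bar{f}^{*}D$; combining Nevanlinna's lemma on the logarithmic derivative in its higher-jet form (the estimate that $m\big(r, P(j_k\bar{f})\big)$ is negligible compared with the order function of $\bar{f}$, as in Bloch, Green--Griffiths and Noguchi) with the First Main Theorem for the ample divisor $D$ (which makes the counting function of $\bar{f}^{*}D$ dominate that same order function) forces $P(j_k\bar{f}) \equiv 0$. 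Hence $j_k\bar{f}(\mathbb{C})$ is contained in the common zero locus of all such jet differentials; by Bloch's argument this locus projects to a proper closed subvariety of $Y$, contradicting the Zariski-density of $\bar{f}(\mathbb{C})$ in $Y$. Therefore $\dim Y = 0$, and reversing the reductions shows that $X$ is a translate of an abelian subvariety, nonzero because $f$ is non-constant. An alternative to the jet-theoretic endgame would be to invoke Kawamata's structure theorem together with semipositivity of the relevant direct image sheaves; either way, the delicate ingredient is the value-distribution input that rules out a Zariski-dense entire curve in a positive-dimensional variety of general type.
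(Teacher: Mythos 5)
The paper does not prove Theorem~\ref{thm:Bloch-Ochiai}; it is quoted from Kobayashi \autocite[Chapter~9, 3.9.19]{Kobayashi98} purely as motivation for the hyperbolic analogue, so there is no internal proof to compare yours against. Judged on its own terms, your sketch is a faithful outline of the standard modern route (Bloch's jet method as systematized by Green--Griffiths, Kawamata and Noguchi): the reduction via the stabilizer $B$ and Ueno's fibration, the observation that the jet bundles of the ambient torus are trivial so the jet lifts land in a fixed algebraic family, the Nevanlinna-theoretic vanishing $P(j_k\bar f)\equiv 0$ via the logarithmic derivative lemma combined with the First Main Theorem for the ample divisor $D$, and the endgame that the common zero locus projects properly. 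The reduction steps and the statement that it suffices to show $\dim Y=0$ are all correct (with the small caveat that lifting $\bar f$ through the resolution $\widetilde Y\to Y$ needs the density of $\bar f(\mathbb{C})$ to keep the image off the indeterminacy locus).

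The two places where your argument is genuinely incomplete, and which you rightly flag as the main obstacle, are: (i) the existence of nonzero jet differentials on $\widetilde Y$ with values in $\mathcal{O}(-D)$ for an \emph{ample} $D$, and (ii) the claim that the base locus of all such differentials projects to a proper subvariety of $Y$. Neither follows from ``$Y$ is of general type'' for an arbitrary projective variety (that is precisely the open Green--Griffiths--Lang problem); what makes it work here is the special structure $Y\subset A/B$: the cotangent sheaf $\Omega^1_Y$ is a quotient of the trivial sheaf $\Omega^1_{A/B}|_Y$, hence globally generated, and Kawamata's/Bloch's analysis of the jet projections (using crucially that the stabilizer of $Y$ is finite) is exactly what produces the degenerate locus. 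As written these are cited rather than argued, so the proposal is a correct high-level sketch but not a self-contained proof; the ``delicate ingredient'' you name is indeed where all the work in Ochiai's and Kawamata's completions of Bloch's programme lies. Also, a small precision: $P(j_k\bar f)$ is a holomorphic section of $\bar f^{*}\mathcal{O}(-D)$ rather than a function on $\mathbb{C}$, which is what makes the First Main Theorem for $D$ applicable.
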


In \cite{UllmoYafaev16}, Ullmo and Yafaev formulate and prove an analogue of this result for compact Shimura varieties.

Let $\mathcal{D}$ be a hermitian symmetric space realised as a bounded symmetric domain in $\mathbb{C}^{n}$ via the Harish-Chandra embedding\footcite[see][Chapter 4]{Mok89}, $G$ its isometry group and $\Gamma\subset G(\mathbb{R})$ an arithmetic lattice. Let $S=\Gamma\backslash\mathcal{D}$. Assume that $S$ is a component of a Shimura variety; in particular $G$ is defined over $\mathbb{Q}$ and $\Gamma$ is a congruence subgroup of $G(\mathbb{Q})$. Finally consider a holomorphic function $f:\mathbb{C}^{m}\to \mathbb{C}^{n}$ such that $f(\mathbb{C}^{m})\cap \mathcal{D}\neq \emptyset$. 

\begin{theorem}[{\cite[Theorem 1.2]{UllmoYafaev16}}]
	\label{thm:holomorphic curves in compact Shimura}
	Let $\pi:\mathcal{D}\to S$ be the quotient map, $f$ as above and $V=f(\mathbb{C}^{m})\cap \mathcal{D}$. Assume $S$ is compact then the components of the Zariski closure $\Zar(\pi(V))$ of $\pi(V)$ in $S$ are weakly special subvarieties of  $S$.
\end{theorem}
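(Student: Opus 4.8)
The plan is to set up an induction on $\dim S$ and, at the inductive step, to combine the hyperbolic Ax--Lindemann--Weierstrass theorem for Shimura varieties (which I assume available) with the monodromy characterisation of weakly special subvarieties and a Bloch--Ochiai type argument exploiting that the source $\mathbb{C}^{m}$ carries no non-constant bounded holomorphic function while $\mathcal{D}$ is a \emph{bounded} domain and $S$ is compact.

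\emph{Reductions.} Since $S$ is compact it is projective, so every irreducible component $W$ of $\Zar(\pi(V))$ is a projective subvariety; fix one and assume, for contradiction, that it is not weakly special. Using that a complex variety is not a countable union of proper closed subvarieties, one may pass to a single connected component $V_{0}$ of $V$ for which $W$ is still a component of $\Zar(\pi(V_{0}))$ and $\pi(V_{0})\cap W$ is Zariski dense in $W$. Let $S'$ be the smallest weakly special subvariety of $S$ containing $W$; since $W$ is not weakly special, $W\subsetneq S'$ and $\dim S'\le\dim S$. A weakly special subvariety is, up to the product decomposition defining it, the connected Shimura variety attached to a semisimple $\mathbb{Q}$-subgroup of $G$ of Hermitian type; projecting a suitable $\Gamma$-translate of $f(\mathbb{C}^{m})$ to the factor cutting out $S'$ and reparametrising a component of the resulting analytic set (a step needing some care, since one must again produce a holomorphic map off an affine space) puts us inside $S'$. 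We are thus reduced to the case where $W$ lies in \emph{no proper weakly special subvariety of $S$} and must prove $W=S$.

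\emph{Algebraic thickening via Ax--Lindemann.} Let $X\subseteq\check{\mathcal{D}}$ be the Zariski closure of $f(\mathbb{C}^{m})$ in the compact dual; it is irreducible (by the identity principle the Zariski closure of any non-empty open analytic piece of $f(\mathbb{C}^{m})$ is again $X$) and $X\cap\mathcal{D}\supseteq V\neq\emptyset$. Applying the hyperbolic Ax--Lindemann--Weierstrass theorem to the algebraic set $X\cap\mathcal{D}$, its image $\Zar(\pi(X\cap\mathcal{D}))$ is a finite union of weakly special subvarieties; since $\pi(V)\subseteq\pi(X\cap\mathcal{D})$, the irreducible $W$ lies inside one of them, which by the previous paragraph must be $S$, so $\Zar(\pi(X\cap\mathcal{D}))=S$. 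In other words: although the transcendental image $\pi(V)$ may be small, the algebraic thickening $\pi(X\cap\mathcal{D})$ is Zariski dense in $S$, i.e.\ $X\cap\mathcal{D}$ is contained in no proper weakly special subvariety's preimage.

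\emph{Descent to the transcendental image --- the main obstacle.} It remains to upgrade this to $\Zar(\pi(V))=S$, i.e.\ $W=S$; this is where I expect the essential difficulty, and it is the exact analogue of the step in which Bloch--Ochiai passes from the algebraic group generated by $f(\mathbb{C})$ back to $f(\mathbb{C})$ itself. Suppose $W\subsetneq S$ and pick a component $\tilde{W}$ of $\pi^{-1}(W)$ with $\Zar(\pi(\tilde{W}))=W$ containing a positive-dimensional analytic piece $Z$ of $f(\mathbb{C}^{m})$ (extracting such a component out of $V_{0}$ is a routine finiteness argument). Then $\tilde{W}\subsetneq\mathcal{D}$ is a \emph{proper} closed analytic subset, stable under $\Gamma_{\tilde{W}}:=\{\gamma\in\Gamma:\gamma\tilde{W}=\tilde{W}\}$; since $S$ is compact, $W$ is projective and $\Gamma_{\tilde{W}}\backslash\tilde{W}\to W$ is finite onto its image, and --- because we reduced to $W$ lying in no proper weakly special subvariety --- the monodromy characterisation of weakly special subvarieties (André--Deligne, Moonen, Ullmo--Yafaev) forces $\Gamma_{\tilde{W}}$ to be Zariski dense in $G$. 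Moreover $\tilde{W}$, being a complex-analytic subset of the bounded symmetric domain $\mathcal{D}$, inherits from the Bergman metric a metric of holomorphic sectional curvature bounded above by a negative constant, hence is Kobayashi hyperbolic. The plan is to play these facts against each other: one must show that no proper such $\tilde{W}$ can contain a piece of the entire map $f$ while remaining compatible with the Hodge-genericity of $X\cap\mathcal{D}$ established above --- the intuition being that a Zariski-dense $\Gamma_{\tilde{W}}$ would sweep $\tilde{W}$ over all of $\mathcal{D}$, and that the hyperbolicity of $\tilde{W}$ together with the non-hyperbolicity of the source $\mathbb{C}^{m}$ (via the boundedness of $\mathcal{D}$ and Liouville) leaves no room for $\pi(V)$ to be confined to $W$ unless $W=S$. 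Making this precise --- presumably via a Nevanlinna or jet-differential estimate on the compact $S$ exploiting the negativity of $T_{\mathcal{D}}$ --- is the crux; everything preceding it is bookkeeping around Ax--Lindemann and the group-theoretic classification of weakly special subvarieties.
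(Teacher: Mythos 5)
Your proposal has a genuine gap at precisely the step you flag as the crux, and the preparatory ``algebraic thickening'' step does not in fact set it up. For a general transcendental $f$ the image $f(\mathbb{C}^{m})$ is Zariski dense in $\mathbb{C}^{n}$, so the Zariski closure $X\subseteq\check{\mathcal{D}}$ is all of $\check{\mathcal{D}}$, hence $X\cap\mathcal{D}=\mathcal{D}$ and Ax--Lindemann applied to it yields the tautology $\Zar(\pi(\mathcal{D}))=S$. Ax--Lindemann applies to \emph{algebraic} subsets of $\mathcal{D}$; the whole difficulty of the Bloch--Ochiai analogue is that the image of a transcendental $f$ carries no such algebraic structure, so passing from $\pi(X\cap\mathcal{D})$ back to $\pi(V)$ \emph{is} the theorem, not a residual estimate. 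The Nevanlinna/jet-differential descent you gesture at to bridge this is not supplied, and I do not believe it is known to close in this generality.

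What Ullmo--Yafaev (and this paper, in the non-compact case) do instead is a Pila--Zannier counting argument, which manufactures a positive-dimensional \emph{semialgebraic} set to feed into Ax--Lindemann rather than trying to apply Ax--Lindemann to the image of $f$ directly. One restricts $f$ to a bounded polydisc $B(0,R_{i})\cap U_{i}$ whose image hits $\partial\mathcal{D}$, so that $V_{i}=f(B(0,R_{i})\cap U_{i})$ is definable in $\mathbb{R}_{an}$. A curve $C\subset V_{i}$ then meets at least $c_{1}T^{c_{2}}$ translates $\gamma\mathscr{F}$ with $H(\gamma)\le T$ of a fundamental set, by combining Hwang--To's exponential volume growth of $C\cap B(x_{0},R)$ with the uniform upper bound $\Vol_{\gamma C}(\gamma C\cap\mathscr{F})\leq c_{3}$ of Lemma \ref{lemma:lemma 5.8} (proved in the compact case by intersection theory, and here by o-minimal uniform finiteness in the Siegel-domain coordinates of Proposition \ref{prop:proposition 3.2}). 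Pila--Wilkie then produces a positive-dimensional semialgebraic $X'\subset G(\mathbb{R})$ with $X'\cdot V_{i}\subset\pi^{-1}(W_{i})$ (Theorem \ref{thm:thm 2.3}), and \emph{only now} is Ax--Lindemann invoked, applied to the orbit $X'\cdot P$ of a point, to obtain a Zariski-dense family of weakly special subvarieties inside $W_{i}$. The induction is then closed via Ullmo's structure theorem for subvarieties containing such a dense family, not via a monodromy/hyperbolicity argument. Your reduction paragraph is compatible in spirit with that last step, but without the counting argument you never obtain the dense family of weakly specials on which it feeds.
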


For general definitions about Shimura varieties and weakly special subvarieties see \cite{UllmoYafaev14} and the references contained there.

Along with the Bloch-Ochiai theorem the above result draws inspiration from the hyperbolic Ax-Lindemann theorem, first proven by \citeauthor{UllmoYafaev14} in \cite{UllmoYafaev14} for compact Shimura varieties and then in general by \citeauthor{KlinglerUllmoYafaev16} in \cite{KlinglerUllmoYafaev16}.

\begin{theorem}[Ax-Lindemann]
	\label{thm:Ax-Lindemann}
	Let $Y\subset \mathcal{D}$ be an algebraic subset of $\mathcal{D}$\footnote{An algebraic subset of $\mathcal{D}$ is a component of the intersection of an algebraic subset of $\mathbb{C}^{n}$ with $\mathcal{D}$}. Then the components of the Zariski closure $\Zar(\pi(Y))$ are weakly special.
\end{theorem}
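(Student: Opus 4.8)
The plan is to run the Pila--Zannier strategy for functional transcendence: combine the definability of the uniformisation map with the Pila--Wilkie counting theorem, following the arguments of Pila, Ullmo and Yafaev.

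Since an ``algebraic subset'' in the sense of the footnote is irreducible, $\Zar(\pi(Y))$ is irreducible; call it $Z$. If $\dim Z=0$ then $Y$ is a point and there is nothing to prove, so assume $\dim Z>0$, let $\tilde Z\subseteq\mathcal{D}$ be the component of $\pi^{-1}(Z)$ containing $Y$, and let $\Gamma_{\tilde Z}\subseteq\Gamma$ be its stabiliser (an arithmetic group, Zariski dense in a semisimple factor of $G$). Replacing $Y$ by a maximal irreducible algebraic subset of $\tilde Z$ containing it changes neither the Zariski closure of $\pi(Y)$ nor the claim, so it suffices to prove that this maximal $Y$ is weakly special: a weakly special subset of $\mathcal{D}$ has closed, weakly special (in particular algebraic) image in $S$, whence $Z=\Zar(\pi(Y))=\pi(Y)$ would be weakly special.

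\emph{Producing an algebraic stabiliser.} Fix a semialgebraic fundamental set $\mathcal{F}$ for $\Gamma$ acting on $\mathcal{D}$; by Peterzil--Starchenko the restriction $\pi|_{\mathcal{F}}$ is definable in $\mathbb{R}_{\mathrm{an},\exp}$ --- this is the input whose non-compact version requires the delicate analysis near the cusps. Fixing a matrix realisation of $G$ and working in a suitable bounded region, one obtains a definable family containing the set of $g\in G(\R)$ with $gY\subseteq\tilde Z$. Every $\gamma\in\Gamma_{\tilde Z}$ lies in this set, and counting those of height at most $T$ shows their number grows at least like a fixed positive power of $T$ --- \emph{unless} $Y$ is already weakly special, which is the conclusion we want. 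By the Pila--Wilkie theorem these rational points of bounded height cannot all be ``transcendental'', so the set contains a connected positive-dimensional semialgebraic arc; translating it through the identity, taking the group it generates, and arguing that this group descends to a $\Q$-subgroup $H\subseteq G$, one obtains a positive-dimensional connected $\Q$-subgroup $H$ with $\overline{H\cdot Y}$ an algebraic subset of $\tilde Z$ containing $Y$. Maximality forces $H\cdot Y=Y$, so $H$ stabilises $Y$.

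\emph{From the stabiliser to weak specialness.} Now induct on $\dim\mathcal{D}$. Let $H_Z\subseteq G$ be the connected algebraic monodromy group of the smooth locus of $Z$; by André's theorem it is a normal subgroup of the derived group of the generic Mumford--Tate group on $Z$. Combining this with the nontrivial stabiliser $H$, one shows that the smallest normal $\Q$-subgroup of $G^{\mathrm{ad}}$ containing the images of $H$ and $H_Z$ is a \emph{proper} factor; the resulting decomposition $G^{\mathrm{ad}}=G_1\times G_2$, $\mathcal{D}=\mathcal{D}_1\times\mathcal{D}_2$ exhibits $Y=\mathcal{D}_1\times\{t\}$ for some $t\in\mathcal{D}_2$, which is exactly the definition of a weakly special subvariety. (If $H$ already acts transitively one is in the base case $Y=\mathcal{D}$; otherwise pass to the quotient by $H$ and apply the inductive hypothesis to the image of $Y$, an algebraic subset of a smaller domain.)

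\emph{Main obstacle.} The two substantive difficulties are (i) the definability of $\pi|_{\mathcal{F}}$ near the cusps in the non-compact case, together with the descent of the semialgebraic group produced by Pila--Wilkie to a $\Q$-algebraic subgroup, and (ii) the last step, where ``$Y$ carries a positive-dimensional $\Q$-algebraic stabiliser'' must be upgraded to ``$Y$ is weakly special'' using the normality of algebraic monodromy groups and a careful reduction to a product situation; the point count itself is essentially formal once the definability input is in hand.
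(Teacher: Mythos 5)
The paper does not prove Theorem~\ref{thm:Ax-Lindemann}: it is cited as a known result, proved by Ullmo--Yafaev in the compact case and by Klingler--Ullmo--Yafaev in general, and then invoked as a black box in the deduction of Theorem~\ref{thm:zar dense ws subvars} from Theorem~\ref{thm:thm 2.3}. There is therefore no ``paper's own proof'' to compare your attempt against; what you have written is a sketch of the Klingler--Ullmo--Yafaev argument, which is indeed the correct strategy and is the one the present paper's own Theorem~\ref{thm:thm 2.3} imitates.

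On the merits of the sketch itself: the overall architecture (definability of $\pi|_{\mathcal{F}}$ via Peterzil--Starchenko, reduction to a maximal algebraic $Y$, Pila--Wilkie producing a positive-dimensional semialgebraic set, descent to a $\mathbb{Q}$-stabiliser, and the final step via Andr\'e's monodromy theorem and an induction on products) is faithful to the KUY proof. One step is misstated, though. You write that counting $\gamma\in\Gamma_{\tilde Z}$ of height $\leq T$ ``shows their number grows at least like a fixed positive power of $T$ --- \emph{unless} $Y$ is already weakly special.'' That conditional is not how the argument runs: the polynomial lower bound on the counting function is unconditional for any positive-dimensional $Y$. It comes from a volume lower bound of Hwang--To type (cf.\ Theorem~\ref{thm:Theorem 5.7} and Lemma~\ref{lemma:lemma 5.8} in the present paper) together with the logarithmic comparison between hyperbolic distance and matrix height (Lemmas~\ref{lemma:Lemma 5.4}--\ref{lemma:Lemma 5.5}), and it holds whether or not $Y$ turns out to be weakly special. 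The conclusion ``$Y$ weakly special'' is not a fork in the counting step; it is what one extracts at the end from the positive-dimensional $\mathbb{Q}$-stabiliser via monodromy. Also, the set one actually counts is not $\Gamma_{\tilde Z}$ itself but the set of $\gamma\in\Gamma$ with $\gamma Y\subset\pi^{-1}(Z)$ (equivalently $\gamma\mathcal{F}\cap Y\neq\emptyset$, by analytic continuation and $\Gamma$-invariance of $\pi^{-1}(Z)$, as in Lemma~\ref{lemma:Sigma}); the stabiliser of $Y$ only enters after maximality is used to upgrade translates staying inside $\pi^{-1}(Z)$ to genuine stabilising elements.
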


Our aim is to prove the result analogous to theorem \ref{thm:holomorphic curves in compact Shimura} for all Shimura varieties (not necessariòy compact), thus completing the proof of:

\begin{theorem}[Main Result]
	\label{thm:main result}
	Let $\pi:\mathcal{D}\to S$ be the quotient map, $f$ as above and $V=f(\mathbb{C}^{m})\cap \mathcal{D}$. Then the components of the Zariski closure $\Zar(\pi(V))$ of $\pi(V)$ in $S$ are weakly special subvarieties of  $S$.
\end{theorem}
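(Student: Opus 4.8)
\emph{Setup and reduction.} Put $\Omega:=f^{-1}(\mathcal{D})\subseteq\C^{m}$, a non-empty open set, and $W_{0}:=\Zar_{\C^{n}}(f(\C^{m}))$. For any connected component $\Omega_{0}$ of $\Omega$ the image $f(\Omega_{0})$ is still Zariski-dense in $W_{0}$, since an analytic subset of $\C^{m}$ with non-empty interior is all of $\C^{m}$. A Baire-category argument over the countably many components of $\Omega$ and the finitely many irreducible components of $\Zar(\pi(V))$ reduces the theorem to the following statement: for every connected component $\Omega_{0}$ of $\Omega$, the \emph{irreducible} subvariety $Z_{0}:=\Zar(\pi(f(\Omega_{0})))$ of $S$ is weakly special (irreducibility of $Z_{0}$ holds because $\Omega_{0}$ is connected). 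Let $\tilde{Z}\subseteq\mathcal{D}$ be the smallest closed analytic subset containing $f(\Omega_{0})$; then $\pi(\tilde{Z})\subseteq Z_{0}$ and $\Zar(\pi(\tilde{Z}))=Z_{0}$. Since connected components of an open set are open, one has the boundary relation $\partial\Omega_{0}\subseteq f^{-1}(\partial\mathcal{D})$, i.e.\ the entire map $f$ carries the boundary of $\Omega_{0}$ into the topological boundary of the bounded domain $\mathcal{D}$; in particular $f$ is bounded on $\overline{\Omega_{0}}$, so if $\Omega_{0}=\C^{m}$ then $f$ is constant and there is nothing to prove, and we may assume $\partial\Omega_{0}\neq\emptyset$.

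\emph{The bi-algebraic mechanism.} Following Ullmo-Yafaev's proof of Theorem~\ref{thm:holomorphic curves in compact Shimura}, the plan is to combine the hyperbolic Ax-Lindemann theorem (Theorem~\ref{thm:Ax-Lindemann}) with a Bloch-Ochiai-type input in order to prove that $\tilde{Z}$ is an algebraic subset of $\mathcal{D}$; since $\pi(\tilde{Z})$ is Zariski-dense in $Z_{0}$, Ax-Lindemann then yields that $Z_{0}$ is weakly special. On the one hand, $W_{0}\cap\mathcal{D}$ is an algebraic subset of $\mathcal{D}$ containing $\tilde{Z}$, so the components of $\Zar(\pi(W_{0}\cap\mathcal{D}))$ are weakly special, and one uses this to reduce to the case $W_{0}=\C^{n}$, i.e.\ $f(\Omega_{0})$ Zariski-dense in $\C^{n}$. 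On the other hand, consider the stabiliser $\Gamma_{\tilde{Z}}=\{\gamma\in\Gamma:\gamma\tilde{Z}=\tilde{Z}\}$ and its $\Q$-Zariski closure $H\subseteq G$: because $\tilde{Z}$ is $\Gamma_{\tilde{Z}}$-invariant, $\Gamma_{\tilde{Z}}$ is Zariski-dense in $H$, and $G$ acts algebraically on the compact dual $\check{\mathcal{D}}$ of $\mathcal{D}$, the set $\tilde{Z}$ is invariant under $H(\R)^{+}$ and is hence a union of $H(\R)^{+}$-orbits, each of which \emph{is} an algebraic subset of $\mathcal{D}$. Everything then comes down to showing that $H$ is large enough that $\tilde{Z}$ is a \emph{single} such orbit --- equivalently, that the minimal analytic set $\tilde{Z}$ carrying the Zariski-dense image of the entire map $f$ cannot be a proper, genuinely transcendental analytic subset of $\mathcal{D}$.

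\emph{The new difficulties, and the main obstacle.} When $S$ is compact, $S$ is projective, the uniformisation admits a fundamental domain relatively compact in $\mathcal{D}$, and the Bloch-Ochiai input above --- that $\tilde{Z}$ is bounded below, hence $\Gamma_{\tilde{Z}}$ is large --- is obtained essentially as in Ullmo-Yafaev, exploiting that $f$ is \emph{entire} (so $f$ is bounded on $\overline{\Omega_{0}}$ with $f(\partial\Omega_{0})\subseteq\partial\mathcal{D}$, which prevents $\tilde{Z}$ from being a small transcendental analytic set). The non-compact case forces new work, all concentrated at the cusps: the Zariski closures must be tracked in the Baily-Borel compactification $S^{*}$ (and in a smooth toroidal compactification), since $S$ is only quasi-projective; $\pi(f(\Omega_{0}))$ may accumulate along $S^{*}\setminus S$, whose strata are themselves Shimura varieties of smaller dimension, so that part is handled by induction; and the uniformisation degenerates near the rational boundary components of $\mathcal{D}$, so the stabiliser/monodromy argument must be enlarged to incorporate the \emph{parabolic} (unipotent) monodromy acquired there. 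The tool that makes this boundary analysis effective is the definability, in the o-minimal structure $\R_{\mathrm{an},\exp}$, of the restriction of $\pi$ to a Siegel set, due to Klingler-Ullmo-Yafaev. The main obstacle is precisely this cusp analysis: controlling the accumulation of $\pi(f(\Omega_{0}))$ on the Baily-Borel boundary and the parabolic monodromy it picks up there, and feeding this simultaneously into the stabiliser argument and the Bloch-Ochiai step so that Ax-Lindemann still applies --- the ingredient that was missing for non-compact Shimura varieties, and whose supply completes the proof of Theorem~\ref{thm:main result} for all Shimura varieties.
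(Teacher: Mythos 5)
Your proposal takes a genuinely different route and has a gap at its central step. You form the stabiliser $\Gamma_{\tilde{Z}}$ of the minimal analytic set $\tilde{Z}$ carrying $f(\Omega_0)$, take its $\Q$-Zariski closure $H\subseteq G$, and assert that since $G$ acts algebraically on the compact dual, $\Gamma_{\tilde{Z}}$-invariance of $\tilde{Z}$ promotes to $H(\R)^{+}$-invariance, so that $\tilde{Z}$ is a union of algebraic orbits. That promotion is valid only when $\tilde{Z}$ is already Zariski-closed, which is precisely the algebraicity you are trying to establish: an analytic set invariant under a Zariski-dense subgroup of a Lie group need not be invariant under the whole group, so the argument is circular. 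The decisive sub-claims (``$\Gamma_{\tilde{Z}}$ is large,'' ``$\tilde{Z}$ is a single orbit'') are left as gaps to be filled, with no mechanism supplied. Note also that the compact case in \cite{UllmoYafaev16} is itself a Pila--Wilkie counting argument, not a stabiliser/monodromy argument, so the template you invoke does not exist there either.

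The paper instead proves Theorem~\ref{thm:thm 2.3}: there is a positive-dimensional semialgebraic $X\subset G(\R)$ with $X\cdot V_i\subset\pi^{-1}(W_i)$, obtained from Pila--Wilkie applied to a polynomial lower bound on the number of $\gamma\in\Gamma$ of bounded height with $\gamma\mathscr{F}\cap C\neq\emptyset$ (Theorem~\ref{thm:intersection holomorphic curves and fundamental domains}). The decisive new ingredient for the non-compact case --- the one your sketch does not identify --- is Lemma~\ref{lemma:lemma 5.8}, the uniform bound on $\Vol_{\gamma C}(\gamma C\cap\mathscr{F})$ over all $\gamma\in\Gamma$; its proof combines the bounded Harish--Chandra realisation, the Siegel-set description of Proposition~\ref{prop:proposition 3.2}, Mumford's comparison of metrics, and an o-minimal cell-decomposition bound on fibre cardinalities (which replaces the intersection-number bounds available for algebraic curves in \cite{KlinglerUllmoYafaev16}). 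Ax--Lindemann applied to $X\cdot P$ then gives Zariski-dense weakly special subvarieties of $W_i$ (Theorem~\ref{thm:zar dense ws subvars}), and one concludes by induction on dimension via Ullmo's structure theorem \cite[Th\'eor\`eme 1.3]{Ullmo14}, not by showing $\tilde{Z}$ itself is algebraic. You correctly flag the roles of the bounded realisation of $\mathcal{D}$ and of Siegel-set definability, but in the paper these feed into the volume bound and the definability of the counting set $\Sigma(W)$, not into a stabiliser argument.
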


As in \cite{UllmoYafaev16}, the proof follows the general lines of the proof of the hyperbolic Ax-Lindemann theorem. In particular, it relies on the theory of o-minimal structures and specifically on the use of Pila-Wilkie's theorem on counting rational points in definable sets (see theorem \ref{thm:Pila-Wilkie}). 

The main steps of the proof are as follows. First we reduce to proving the result separately on several 'branches' $V_{i}$ of the portion of the image of $f$ contained in $\mathcal{D}$ in such a way that each $V_{i}$ is definable in $\mathbb{R}_{an,exp}$. Then, we use toroidal compactifcations of Shimura varieties, Pila-Wilkie's and the Ax-Lindemann-Weierstrass theorem to prove that the Zariski closure of the image of $U_{i}$ contains a Zariski dense set of weakly special subvarieties. Here, the crucial part is Lemma \ref{lemma:lemma 5.8}, which asserts that the volume of the intersection between one of these definable curves $U_i$ in $\mathcal{D}$ and a translate $\gamma \mathscr{F}$ of a fixed fundamental domain $\mathscr{F}$ for the action of $\Gamma$ on $\mathcal{D}$ is bounded independently of $\gamma\in \Gamma$. Finally, we conclude the proof of the main result of the paper using a result of Ullmo \cite[Théorème 1.3]{Ullmo14} and induction on the dimension.

We point out that, although our result is independent of the realisation of the symmetric domain $\mathcal{D}$ uniformising $S$\footnote{see \autocite{Ullmo14} for the definition of realisation of a symmetric domain.}, we use in a crucial way that there is a bounded realisation. Indeed this allows us to reduce the proof to the definable sets $V_{i}$ and is again used in a fundamental way in the proof of the above cited Lemma \ref{lemma:lemma 5.8}.

To stress further the importance of the boundedness of $\mathcal{D}$, we point out that questions related to the Bloch-Ochiai theorem in the abelian setting were investigated using o-minimal techniques by \citeauthor{UllmoYafaev17} in \cite{UllmoYafaev17}. In this setting the authors were not able to prove the full Bloch-Ochiai theorem with the present techniques; this is ultimately due to the fact that the symmetric space uniformizing an abelian variety of dimension $d$ is $\mathbb{C}^{d}$ which has no bounded realisation.

\paragraph{Acknowledgements} 
\label{par:acknowledgments}
I would like to thank my supervisor Andrei Yafaev for pointing me to this problem and some helpful discussions. I would also like to thank Jacob Tsimerman who was the first to point out that the result in Lemma \ref{lemma:lemma 5.8} could be generalised from the cocompact case in \cite{UllmoYafaev16} to the general case using the existence of a bounded realisation for $\mathcal{D}$.

\noindent This work was supported by the Engineering and Physical Sciences Research Council [EP/L015234/1]. 
The EPSRC Centre for Doctoral Training in Geometry and Number Theory (The London School of Geometry and Number Theory), University College London.

\section{Preliminaries} 
\label{sec:preliminaries}

First we fix some notation.

\begin{itemize}
	\item Let $(G, X)$ be a Shimura datum and let $\mathcal{D}$ be a connected component of $X$. $\mathcal{D}$ is an hermitian symmetric domain which we realise as a bounded hermitian symmetric domain in the holomorphic tangent space $\mathfrak{p}\cong \mathbb{C}^{n}$ at a point $x\in \mathcal{D}$ via the Harish-Chandra embedding.
	\item Let $G(\mathbb{Q})^{+}$ be the stabiliser of $\mathcal{D}$ in $\mathbb{G}(\mathbb{Q})$ and $\Gamma \subset G(\mathbb{Q})^{+}$ a neat arithmetic subgroup; we may assume there is a faithful finite dimensional representation $\rho:G\to GL(E)$ defined over $\mathbb{Q}$ and a lattice $E_{\mathbb{Z}}\subset E$ such that $\Gamma=G(\mathbb{Z})=G(\mathbb{Q})\cap GL(E_{\mathbb{Z}})$.
	\item Let $\Sigma\subset \mathcal{D}$ be a Siegel set for the action of $\Gamma$ such that there exists a finite set  $J\subset G(\mathbb{Q})$ such that $J. \Sigma = \mathscr{F}$ is a fundamental set for the action of $\Gamma$.
\end{itemize}

Let $f:\mathbb{C}^{m}\to \mathbb{C}^{n}$ be an holomorphic map such that $f(\mathbb{C}^{m})\cap \mathcal{D}\neq \emptyset$. We decompose 
\begin{equation}
	f^{-1}(f(\mathbb{C}^{m})\cap \mathcal{D})=\coprod_{i\in I} U_{i}
\end{equation}
as a disjoint union of connected components. By definition of $U_{i}$, 
\begin{equation}
	\bar{f(U_{i})}\cap \partial X\neq \emptyset;
\end{equation}
hence there exists $R_{i}$ such that $\bar{f(U_{i}\cap B(0, R_{i}))}\cap \partial X\neq \emptyset$.

By analytic continuation, it follows that the Zariski closures $Zar(\pi\circ f(U_{i}))$ and $Zar(\pi\circ f(U_{i}\cap B(0,R_{i})))$ are equal. Let $V_{i}=f(U_{i}\cap B(0,R_{i}))$ and $W_{i}=Zar(\pi(V))\subset S$. Following \cite{UllmoYafaev16}, we will deduce our main result \ref{thm:main result} from the following theorem.

\begin{theorem}
	\label{thm:thm 2.3}
	There exists a positive dimensional semialgebraic set $X$ in $G(\mathbb{R})$ such that
	\begin{equation}
		X\cdot f(B(0,R_{i})\cap U_{i}) \subset \pi^{-1}(W_{i}).
	\end{equation}
\end{theorem}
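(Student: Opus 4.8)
The plan is to run the Pila--Zannier strategy on the definable set $V_i$. First I would set up the relevant definable sets: choosing the Siegel set $\Sigma$ and the fundamental set $\mathscr{F} = J.\Sigma$, the quotient map $\pi|_{\mathscr{F}}:\mathscr{F}\to S$ is definable in $\R_{an,exp}$ (this is the content of the definability of the uniformisation map, via toroidal compactifications). Similarly, since $f$ is holomorphic and $U_i\cap B(0,R_i)$ is bounded, $V_i = f(U_i\cap B(0,R_i))$ is a definable subset of $\mathcal{D}$. The key auxiliary object is the ``counting'' set
\begin{equation}
	Z = \{ g\in G(\R) : \dim\big( g V_i \cap \mathscr{F}\big) = \dim V_i \text{ and } \pi(gV_i\cap\mathscr{F})\subset W_i \},
\end{equation}
or rather a suitable definable approximation of it; the point is that $Z$ contains every $\gamma\in\Gamma$ for which $\gamma \mathscr F$ meets $f(B(0,R_i)\cap U_i)$ in top dimension, because then $\pi(\gamma V_i\cap\mathscr F)\subset \pi(V_i)\subset W_i$.

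Next I would produce many such $\gamma$. Here Lemma \ref{lemma:lemma 5.8} is the crucial input: the volume of $V_i\cap \gamma\mathscr{F}$ is bounded independently of $\gamma$, and since $V_i$ has infinite volume in $\mathcal{D}$ (because its closure touches $\partial X$, so it escapes every compact set and a bounded-geometry volume estimate forces infinite volume), $V_i$ must meet infinitely many translates $\gamma\mathscr{F}$, in fact $\gg T^{\delta}$ of them with $\gamma$ of height $\le T$ for some $\delta>0$ — this is the standard ``many lattice translates'' counting, using that $\Gamma$ is arithmetic so word-length and matrix height are comparable. Each such $\gamma$, viewed via the embedding $\rho:G\to GL(E)$, gives a rational point of height $\le T^{O(1)}$ on the definable set $Z$ (or on its image under a definable parametrisation). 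Applying the Pila--Wilkie theorem \ref{thm:Pila-Wilkie} with $\varepsilon$ small, these rational points cannot all lie in $Z\setminus Z^{\alg}$; hence for $T$ large $Z$ contains a connected positive-dimensional semialgebraic set $X'$ through one of these $\gamma$. Translating, $X = X'\gamma^{-1}$ (or $X'$ itself after absorbing $\gamma$) is a positive-dimensional semialgebraic subset of $G(\R)$ with $X\cdot f(B(0,R_i)\cap U_i)\subset \pi^{-1}(W_i)$, because $W_i$ is Zariski closed and the containment holds on a Zariski-dense-in-$V_i$ set of group elements, hence on all of $X$ by continuity and analytic continuation back to all of $U_i$.

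The main obstacle I expect is the definability and bookkeeping around the set $Z$: one must express ``$\pi(gV_i\cap\mathscr F)\subset W_i$'' as a genuinely definable condition on $g$, which requires knowing $W_i$ is a (complex algebraic, hence real semialgebraic) subvariety and that $\pi$ restricted to the Siegel set is definable — the latter resting on the deep definability results for toroidal compactifications — and then verifying that rational points of $Z$ in the sense of the height coming from $\rho$ are exactly controlled. A secondary but real difficulty is the transition from ``$X$ fixes the containment for group elements'' to ``$X$ fixes it for the whole curve'': one uses that for each fixed $v\in V_i$ the set $\{g : \pi(gv)\in W_i\}$ is closed, intersects these closed conditions over a dense set of $v$, and then invokes analytic continuation (as already used to pass from $U_i\cap B(0,R_i)$ to $U_i$) to upgrade from $V_i$ to $f(B(0,R_i)\cap U_i)$ and get the stated inclusion into $\pi^{-1}(W_i)$. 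Finally, one should check $X$ can be taken inside $G(\R)$ rather than merely in $GL(E)(\R)$, which follows since $G$ is defined over $\Q$ so $G(\R)$ is itself semialgebraic and one intersects.
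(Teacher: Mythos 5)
Your overall architecture matches the paper's: build a definable ``counting'' set in $G(\mathbb{R})$ recording the $g$ for which $g V_i$ meets $\mathscr{F}$ inside $\pi^{-1}(W_i)$ in full dimension, show it contains polynomially many integer points $\gamma\in\Gamma$, and feed this to Pila--Wilkie. Your set $Z$ is essentially the paper's $\Sigma(W)$ (the paper phrases the membership condition as $\dim(g.V\cap\mathcal{F}\cap\pi^{-1}(W))=\dim V$, which sidesteps the ``$\subset$''-definability bookkeeping you were worried about, and then uses analytic continuation once to upgrade to $g.V\subset\pi^{-1}(W)$; your slightly more elaborate continuation argument at the end would also work).

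The genuine gap is in your production of the $\gamma$'s. You correctly identify Lemma \ref{lemma:lemma 5.8} as the upper bound $\Vol_{\gamma C}(\gamma C\cap\mathscr{F})\le c_3$ uniformly in $\gamma$, but you then pass from ``$V_i$ escapes every compact set, hence has infinite volume'' to ``hence it meets $\gg T^{\delta}$ translates $\gamma\mathscr{F}$ with $H(\gamma)\le T$'' by calling this ``standard lattice translate counting.'' That step is where the real content lies, and qualitative infinitude of volume does not give it. The balls $B(x_0,\log T)$ are covered by the translates $\gamma\mathscr{F}$ with $H(\gamma)\lesssim T^{O(1)}$ (this is the content of Lemmas \ref{lemma:Lemma 5.4} and \ref{lemma:Lemma 5.5}, which you correctly allude to as ``height comparable to word length''); combined with the per-cell volume bound $c_3$, the number of such translates meeting $C$ is at least $\Vol_C(C\cap B(x_0,\log T))/c_3$. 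To make this $\gg T^{\delta}$ you need the volume of $C$ in a geodesic ball of radius $R$ to grow \emph{exponentially} in $R$, not just to infinity. This is precisely the Hwang--To estimate (Theorem \ref{thm:Theorem 5.7}), $\Vol_C(C\cap B(x_0,R))\ge c_9\exp(c_{10}R)$, which is a quantitative statement specific to holomorphic curves in bounded symmetric domains and is the second crucial input alongside Lemma \ref{lemma:lemma 5.8}. Without it, your counting argument does not yield the polynomial lower bound required by Pila--Wilkie, and the proof does not close.

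A smaller omission: both you and the paper tacitly reduce from $m\ge 1$ to $m=1$ by picking a holomorphic curve inside $V_i$ whose closure still touches $\partial X$; this should be said explicitly since Theorem \ref{thm:intersection holomorphic curves and fundamental domains} is stated only for curves.
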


Following \cite{UllmoYafaev16}, we now briefly describe how the main result follows from the above theorem. Let $P\in V_{i}$, let $X\subset G(\mathbb{R})$ be a maximal semialgebraic subset such that $X\cdot P\subset \pi^{-1}(W_{i})$. By the above theorem this has positive dimension and by \cite[Lemma 4.1]{PilaTsimerman13}, it is a complex algebraic subset. By the Ax-Lindemann theorem \ref{thm:Ax-Lindemann}, the Zariski closure $Zar(\pi(X\cdot P))\subset W_{i}$ is a union of weakly special subvariety. Hence for each point of $P\in \pi(V_{i})$ there is a weakly special subvariety $Y$ such that $P\in Y\subset W_{i}$. This proves the following

\begin{theorem}
 	\label{thm:zar dense ws subvars}
 	$W_{i}=Zar(\pi(V_{i}))$ contains a Zariski dense subset of weakly special subvarieties.
 \end{theorem}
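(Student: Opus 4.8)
The plan is to derive Theorem \ref{thm:zar dense ws subvars} as a formal consequence of Theorem \ref{thm:thm 2.3}, essentially reproducing the argument sketched in the paragraph following that theorem but filling in the details. First I would fix an arbitrary point $P \in V_i$ and consider the family of semialgebraic subsets $X \subset G(\mathbb{R})$ with the property that $X \cdot P \subset \pi^{-1}(W_i)$. This family is nonempty (it contains the singleton $\{e\}$, since $\pi(P) \in \pi(V_i) \subset W_i$), and one checks it has a maximal element with respect to inclusion: the relevant dimension is bounded by $\dim G(\mathbb{R})$, and at the top dimension the closure of a union of such sets is again of the required form because $\pi^{-1}(W_i)$ is closed (it is the preimage of an algebraic, hence closed, subset) and $\cdot P$ is continuous. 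Call such a maximal $X = X_P$. By Theorem \ref{thm:thm 2.3} applied with the branch $U_i$ through (a preimage of) $P$, there is a positive-dimensional semialgebraic set moving $f(B(0,R_i)\cap U_i)$ — and in particular $P$ — inside $\pi^{-1}(W_i)$; hence $X_P$ has positive dimension.

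Next I would invoke \cite[Lemma 4.1]{PilaTsimerman13} to upgrade $X_P$ from a maximal semialgebraic set to a complex algebraic subset of $G(\mathbb{R})$ (more precisely, its orbit $X_P \cdot x$ in $\mathcal{D}$ is an algebraic subset of $\mathcal{D}$ in the sense of the footnote to Theorem \ref{thm:Ax-Lindemann}). The point of maximality is exactly what makes this lemma applicable, so I would state carefully why the maximal $X_P$ satisfies its hypotheses. Then the orbit $Y_P := X_P \cdot P \subset \mathcal{D}$ (or rather its image under the uniformization, after translating to base at a suitable point) is a positive-dimensional algebraic subset of $\mathcal{D}$ contained in $\pi^{-1}(W_i)$, so by the Ax-Lindemann theorem \ref{thm:Ax-Lindemann} the components of $\mathrm{Zar}(\pi(X_P \cdot P))$ are weakly special subvarieties of $S$; and since $X_P \cdot P \subset \pi^{-1}(W_i)$ we have $\mathrm{Zar}(\pi(X_P \cdot P)) \subset W_i$. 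In particular, passing to the component through $\pi(P)$, there is a weakly special subvariety $Y$ with $\pi(P) \in Y \subset W_i$.

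Finally I would assemble these: as $P$ ranges over $V_i$, the point $\pi(P)$ ranges over $\pi(V_i)$, which is Zariski dense in $W_i = \mathrm{Zar}(\pi(V_i))$ by definition. Each such $\pi(P)$ lies on a weakly special subvariety contained in $W_i$. Hence the union of all weakly special subvarieties contained in $W_i$ contains $\pi(V_i)$, so its Zariski closure contains $\mathrm{Zar}(\pi(V_i)) = W_i$; therefore $W_i$ contains a Zariski dense set of weakly special subvarieties, which is the assertion. The main obstacle — and the only place needing genuine care rather than bookkeeping — is the existence and algebraicity of the maximal semialgebraic $X_P$: one must ensure the family of valid $X$ is stable under the operations needed to produce a maximum (finite unions and Zariski/topological closure within a bounded-dimension stratum), and then check that \cite[Lemma 4.1]{PilaTsimerman13} indeed applies to this maximal object. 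Everything downstream is a direct appeal to Theorem \ref{thm:Ax-Lindemann} and the definition of Zariski closure.
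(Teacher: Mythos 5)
Your proposal reproduces exactly the sketch the paper itself gives between Theorem~\ref{thm:thm 2.3} and Theorem~\ref{thm:zar dense ws subvars}: take a maximal semialgebraic $X_P \subset G(\mathbb{R})$ with $X_P\cdot P\subset \pi^{-1}(W_i)$, note it is positive-dimensional by Theorem~\ref{thm:thm 2.3}, upgrade it to a complex algebraic subset via \cite[Lemma 4.1]{PilaTsimerman13}, and apply Ax--Lindemann; this is the same approach. One small inaccuracy in your interpolated justification for the existence of a maximal $X_P$: an arbitrary union of semialgebraic sets need not be semialgebraic, so ``closure of a union'' does not produce a semialgebraic object; the standard argument instead picks a \emph{connected} semialgebraic subset of maximal dimension through $e$ (existence being guaranteed simply because dimension is bounded by $\dim G(\mathbb{R})$), which is precisely the kind of maximal object to which \cite[Lemma 4.1]{PilaTsimerman13} applies.
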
 

Now we proceed by induction on the dimension of $W_{i}$ to show that theorem \ref{thm:zar dense ws subvars} implies the main result. The case of dimension zero is trivial since all points are weakly special. If $W_{i}$ is special we are done, otherwise by \cite[Théorème 1.3]{Ullmo14}, it follows that the smallest special subvariety $S'\subset S$ containing $W_{i}$ can be decomposed as a product $S'=S_{1}\times S_{2}$  with both factors non trivial, such that
\begin{equation}
	W_{i}=S_{1}\times V'
\end{equation}
for some subvariety $V'\subset S_{2}$.

Let $(G',X')$ be the sub-Shimura datum of $(G,X)$ associated to $S'$. The above decomposition induces a decomposition of the adjoint datum $(G'^{ad},X'^{ad})$as a product $(G_{1},X_{1})\times (G_{2},X_{2})$ such that, for $i=1,2$, $S_{i}=\Gamma_{i}\backslash \mathcal{D}_{i}$ for some suitable arithmetic subgroup $\Gamma_{i}$ of $G_{i}(\mathbb{Q})^{+}$. We can realise both $\mathcal{D}_{i}$ as bounded symmetric domains inside their holomorphic tangent spaces $\mathfrak{p}_{i}$. Then we can write $f:\mathbb{C}^{m}\to \mathfrak{p}_{1}\times \mathfrak{p}_{2}\subset \mathfrak{p}$ as $f=(f_{1},f_{2})$. It now follows that $V'$ is exactly the Zariski closure of $\pi\circ f_{2}(U_{i})$. By theorem \ref{thm:zar dense ws subvars}, $V'$ contains a Zariski dense set of weakly special subvarieties and by the inductive hypothesis it is weakly special.

We now recall two results about the structure of $\mathcal{D}$ at the boundary we will need later.

\begin{proposition}[{\cite[Chapter III.4]{AshMumfordRapoportTai10}}]
	\label{prop:boundary comp decomposition}

Given a boundary component $F\subset \bar{\mathcal{D}}$, its normaliser $N(F)$ in $G$ is a parabolic subgroup and can be decomposed as follows
\begin{equation*}
	N(F)=(G_{h}(F)G_{l}(F)M(F))(V(F)U(F)),
\end{equation*}
where
\begin{itemize}
	\item $R(F)=(G_{h}(F)G_{l}(F)M(F))$ is a Levi factor of $N(F)$ and the product is direct modulo a finite central group
	\item $W(F)=(V(F)U(F))$ is the unipotent radical of $N(F)$
	\item $U(F)$ is the center of $W(F)$ and is a real vector space
	\item $V(F)=W(F)/U(F)$ is also a real vector space of even dimension $2l$
	\item $G_{h}(F)$ modulo a finite center is $Aut^{0}(F)$, all the other factor act trivially
	\item $G_{l}(F)$ modulo a finite center acts on $U(F)$ by inner automorphisms, the other factors commute with $U(F)$
	\item $M(F)$ is compact
\end{itemize}

\end{proposition}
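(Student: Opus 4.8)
The plan is to reduce the statement to the root-theoretic description of boundary components of a bounded symmetric domain and then read off the decomposition from the structure of the restricted root system of hermitian type. First I would set up the standard notation: fix a Cartan subalgebra $\mathfrak{h}$ of $\mathfrak{g}=\mathrm{Lie}(G)$ contained in $\mathfrak{k}=\mathrm{Lie}(K)$, where $K$ is the stabiliser of the base point $x$, and decompose $\mathfrak{g}_{\mathbb{C}}=\mathfrak{k}_{\mathbb{C}}\oplus\mathfrak{p}^{+}\oplus\mathfrak{p}^{-}$ into the eigenspaces of the central element of $\mathfrak{k}$ defining the complex structure on $\mathfrak{p}$. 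Choose, following Harish-Chandra, a maximal set $\gamma_{1},\dots,\gamma_{r}$ of strongly orthogonal noncompact positive roots, giving the polydisc embedding $\Delta^{r}\hookrightarrow\mathcal{D}$. By the Korányi--Wolf classification, every boundary component of $\bar{\mathcal{D}}$ is $G(\mathbb{R})$-conjugate to a standard one $F_{k}$ obtained by pushing the factors indexed by $\gamma_{1},\dots,\gamma_{k}$ to the boundary of their discs, and $N(F_{k})$ is the parabolic subgroup whose Lie algebra, after the partial Cayley transform attached to $\gamma_{1},\dots,\gamma_{k}$, is spanned by the root spaces $\mathfrak{g}_{\alpha}$ with $\alpha$ nonnegative on the cone spanned by $\gamma_{1},\dots,\gamma_{k}$.

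Next I would compute the unipotent radical $W(F)$ directly from the root data. The essential input is the classical coefficient lemma in the hermitian case (Moore, Harish-Chandra): expanded along $\gamma_{1},\dots,\gamma_{r}$, every noncompact positive root has coefficients in $\{0,\tfrac{1}{2},1\}$, nonzero on at most two of the $\gamma_{i}$; consequently a sum of two positive noncompact roots whose supports lie in $\{\gamma_{1},\dots,\gamma_{k}\}$ is never a root. This forces $W(F)$ to be two-step nilpotent, with centre $U(F)$ spanned by the root spaces lying in the $\sum_{i\le k}\gamma_{i}$-direction, which the Cayley transform realises as a real abelian Lie algebra containing the self-dual homogeneous cone attached to $F$, and with $V(F)=W(F)/U(F)$ the ``$\tfrac{1}{2}$-part'', carrying a nondegenerate alternating form and hence of even dimension $2\ell$. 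For the Levi factor $R(F)$, the subsystem of roots supported away from $\gamma_{1},\dots,\gamma_{k}$ generates $G_{h}(F)$, which one identifies with $\Aut^{0}(F)$ by matching the surviving $\mathfrak{p}^{\pm}$-root spaces with the holomorphic tangent space of $F$; the balanced roots meeting $\{\gamma_{1},\dots,\gamma_{k}\}$ generate $G_{l}(F)$, acting on the cone in $U(F)$ through the linear action coming from the associated Jordan algebra; and $M(F)$ is the anisotropic part of the centraliser, acting trivially on both $F$ and $U(F)$. Directness modulo a finite central group follows because the three Levi pieces have pairwise commuting Lie algebras and so can overlap only in a central, hence finite, subgroup of the semisimple $G$; that $N(F)$ is parabolic is immediate since its Lie algebra is a sum of $\mathfrak{h}$ and root spaces closed under addition of roots and contains a Borel of $G_{\mathbb{C}}$.

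The main obstacle I expect is the precise identification of the three Levi pieces — in particular proving $G_{h}(F)=\Aut^{0}(F)$ while checking that $G_{l}(F)$ and $M(F)$ act trivially on $F$. This requires carefully tracking which root spaces are fixed, moved holomorphically, or annihilated by the partial Cayley transform, and then recognising the resulting subalgebra as the isometry algebra of the lower-rank hermitian domain $F$; it is exactly here that the hermitian hypothesis (rather than an arbitrary symmetric space) is used, via the two-eigenvalue structure of $\mathrm{ad}$ of the relevant grading element. The remaining assertions — two-step nilpotency of $W(F)$, that $U(F)$ and $V(F)$ are vector spaces, and that $\dim_{\mathbb{R}}V(F)$ is even — then follow mechanically from the coefficient lemma. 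Since the statement is verbatim \cite[Chapter III.4]{AshMumfordRapoportTai10}, building on Korányi--Wolf and Satake, in the paper I would simply cite it, the sketch above being how one reconstructs the proof.
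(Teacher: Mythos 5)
The paper itself gives no proof of this proposition: it is quoted directly from \cite[Chapter~III.4]{AshMumfordRapoportTai10} and used later as a structural black box (in the Siegel-domain description of Proposition~\ref{prop:proposition 3.2} and in the volume estimate of Lemma~\ref{lemma:lemma 5.8}). Your conclusion that one would simply cite the reference therefore matches the paper exactly, and the sketch you supply --- strongly orthogonal noncompact roots, the partial Cayley transform, and Moore's restricted-root coefficient lemma yielding the $\{0,\tfrac12,1\}$ support pattern, two-step nilpotency of $W(F)$, the symplectic structure on $V(F)$, and the identification of $G_{h}(F)$ with $\Aut^{0}(F)$ --- is a faithful reconstruction of the standard argument in Kor\'anyi--Wolf, Satake, and AMRT. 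One small imprecision worth flagging: to see that $N(F)$ is parabolic in the real group $G$, it is cleaner to note that, after the Cayley transform, $\mathrm{Lie}\,N(F)$ is the nonnegative part of the grading by the cocharacter attached to $\gamma_{1},\dots,\gamma_{k}$ and hence contains a minimal real parabolic subalgebra, rather than appealing to a Borel of $G_{\mathbb{C}}$, which need not be defined over $\mathbb{R}$.
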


\begin{proposition}[{\cite[Proposition 3.2]{KlinglerUllmoYafaev16}}]
	\label{prop:proposition 3.2}
	Fix a boundary component $F\subset \bar{\mathcal{D}}$. Define
	\begin{equation*}
		\mathcal{D}_{F}=\bigcup_{g\in U(F)_{\mathbb{C}}} g.\mathcal{D}\subset \mathcal{D}^{c}.
	\end{equation*}
	There is an holomorphic isomorphism $j: \mathcal{D}_{F}\to U(F)_{\mathbb{C}}\times \mathbb{C}^{l}\times F$. This isomorphism realises $\mathcal{D}$ as a Siegel domain of the third kind
	\begin{equation*}
		\mathcal{D}\mathrel{\mathop{\simeq}^{j}} \left\{ (x,y,t)\in U(F)_{\mathbb{C}}\times \mathbb{C}^{l}\times F \mathrel{| \im(x)+l_{t}(y,y)\in C(F)} \right\}
	\end{equation*}
	where $C(F)$ is a self-adjoint convex cone in $U(F)$ homogeneous under $G_{l}(F)$ and $l_{t}:\mathbb{C}^{l}\times \mathbb{C}^{l}\to U(F)$ is a symmetric bilinear form varying real-analytically with $t\in F$.

	Let $\Sigma\subset \mathcal{D}$ be a Siegel set for the action of $\Gamma$, as above. Then $\Sigma$ is covered by a finite number of open subsets $\Theta$ having the following properties. For each $\Theta$ there is a cone $\sigma$ with $\sigma\subset \bar{C(F)}$, a point $a\in C(F)$, realtively compact subsets $U'$, $Y'$ and $F'$ of $U(F)$, $\mathbb{C}^{l}$ and $F$ respectively such that the set $\Theta$ is of the form
	\begin{equation*}
	\begin{split}
		\Theta \mathrel{\mathop{\simeq}^{j}} &\left\{ (x,y,t)\in U(F)_{\mathbb{C}}\times \mathbb{C}^{l}\times F \mathrel{|} \re(x)\in U', \, y\in Y',\, t\in F' \text{ and }\right. \\
		&\left. \im(x)+l_{t}(y,y)\in \sigma + a\right\}.
	\end{split}
	\end{equation*}
\end{proposition}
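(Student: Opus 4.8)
The plan is to follow the classical analysis of a Hermitian symmetric domain near a boundary component (Ash--Mumford--Rapoport--Tai, Chapter~III) and then to refine it along $\Sigma$. I would work inside the compact dual $\mathcal{D}^{c}$, on which $N(F)_{\mathbb{C}}$ acts holomorphically. By Proposition~\ref{prop:boundary comp decomposition} the centre $U(F)$ of the unipotent radical $W(F)$ of $N(F)$ is an abelian real vector group, so $U(F)_{\mathbb{C}}$ acts holomorphically on $\mathcal{D}^{c}$; since $U(F)(\mathbb{R})$ preserves $\mathcal{D}$, the set $\mathcal{D}_{F}=\bigcup_{g\in U(F)_{\mathbb{C}}} g.\mathcal{D}$ is an open, $U(F)_{\mathbb{C}}$-stable neighbourhood of $\mathcal{D}$ in $\mathcal{D}^{c}$. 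I would then show that $U(F)_{\mathbb{C}}$ acts freely on $\mathcal{D}_{F}$, that the quotient $\mathcal{D}_{F}/U(F)_{\mathbb{C}}$ is a holomorphic fibre bundle over $F$ with fibre $\mathbb{C}^{l}$ — the complex vector space underlying $V(F)=W(F)/U(F)$ with its natural complex structure, $2l=\dim_{\mathbb{R}}V(F)$ — and that this bundle is holomorphically trivial; since $F$ is a contractible Stein manifold this follows from Grauert's Oka principle, or from an explicit trivialisation built from the Levi factor. Composing the trivialisations yields the biholomorphism $j:\mathcal{D}_{F}\to U(F)_{\mathbb{C}}\times\mathbb{C}^{l}\times F$.

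Next I would identify the image $j(\mathcal{D})$. Under $j$ the real subgroup $U(F)(\mathbb{R})$ acts by translation on $\re(x)$, so $j(\mathcal{D})$ is tube-like in the $\im(x)$-direction over the base $\mathbb{C}^{l}\times F$; matching the Harish-Chandra/Lie-algebra data of $\mathcal{D}$ against the Pyatetskii-Shapiro normal form of a bounded symmetric domain attached to $F$ gives the defining condition $\im(x)+l_{t}(y,y)\in C(F)$, where $C(F)\subset U(F)$ is the connected cone on which $G_{l}(F)$ acts transitively — hence homogeneous, and self-adjoint because $\mathcal{D}$ is symmetric — and $l_{t}$ is the $\mathbb{C}^{l}$-Hermitian-type form read off from the action of $V(F)$, depending real-analytically on $t\in F$ by construction.

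Finally I would cover $\Sigma$. Here $F$ is a rational boundary component, so that $N(F)$ is a $\mathbb{Q}$-parabolic, and — as in the theory of toroidal compactifications — $N(F)$ contains the minimal $\mathbb{Q}$-parabolic $P_{0}$ underlying $\Sigma$; then $\Sigma$ is the product of a relatively compact set with a shifted chamber $A_{T}=\{a:\alpha(a)\ge T\text{ for all simple roots }\alpha\}$ of the maximal $\mathbb{Q}$-split torus of $P_{0}$, and, since $\Sigma\subset\mathcal{D}\subset\mathcal{D}_{F}$, the map $j$ is defined on all of $\Sigma$. Decomposing $A_{T}$ relative to $N(F)$ splits the torus direction into a part that drives the point deep into $C(F)$ — forcing $\im(x)+l_{t}(y,y)\in\sigma+a$ for a subcone $\sigma\subset\bar{C(F)}$ and some $a\in C(F)$ depending on $T$ — and a part which, together with the compact factor of $\Sigma$, confines $\re(x)$, $y$ and $t$ to relatively compact sets $U'$, $Y'$, $F'$. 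A compactness argument (covering the compact factor, handling the finite combinatorics of the decomposition of $A_{T}$ relative to $N(F)$, and if necessary refining $\sigma$ into finitely many subcones) then produces finitely many sets $\Theta$ of the stated shape covering $\Sigma$.

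The hard part will be the second step: establishing that the cone is precisely $C(F)$ and the correction term precisely $l_{t}(y,y)$, with the asserted homogeneity, self-adjointness and real-analytic dependence on $t$. This draws on the full structure theory of Hermitian symmetric domains — root systems of Hermitian type, the classification of boundary components, Cayley transforms — whereas the openness of $\mathcal{D}_{F}$, the construction of $j$, and the finiteness of the covering of $\Sigma$ are comparatively formal once that normal form is in hand.
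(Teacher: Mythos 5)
This proposition is not proved in the paper: it is imported verbatim from \cite[Proposition 3.2]{KlinglerUllmoYafaev16}, whose proof in turn rests on the realisation of $\mathcal{D}$ as a Siegel domain of the third kind in \cite[Chapter III.4]{AshMumfordRapoportTai10} and on the explicit description of Siegel sets in those coordinates. Your outline follows exactly that route --- openness and $U(F)_{\mathbb{C}}$-stability of $\mathcal{D}_{F}$, construction of $j$ from the decomposition of $N(F)$ in Proposition \ref{prop:boundary comp decomposition}, identification of $j(\mathcal{D})$ with a Siegel domain, and a chamber/compactness argument for the covering of $\Sigma$ --- so in strategy you are aligned with the sources. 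Two remarks: in \cite{AshMumfordRapoportTai10} the trivialisation of the fibration over $F$ is explicit (built from the action of $W(F)_{\mathbb{C}}$ and the Levi factor), so Grauert's Oka principle is an unnecessary detour; and the covering statement is only meaningful for the finitely many rational boundary components $F$ whose normaliser contains the minimal $\mathbb{Q}$-parabolic attached to $\Sigma$ --- for an arbitrary fixed $F$, as the statement is literally phrased, the Siegel set need not be covered by sets of the stated form, so your step ``$N(F)$ contains the minimal $\mathbb{Q}$-parabolic $P_{0}$ underlying $\Sigma$'' is an assumption on $F$, not a fact.

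As a proof, however, your text has a genuine gap, which you yourself flag: the entire analytic content of the first two paragraphs of the statement --- that the cone is precisely $C(F)$, self-adjoint and homogeneous under $G_{l}(F)$, that the correction term is a form $l_{t}(y,y)$ varying real-analytically in $t$, and that the resulting domain is exactly $j(\mathcal{D})$ --- is ``read off'' or ``matched against'' the Pyatetskii-Shapiro/Wolf--Kor\'anyi normal form, i.e.\ against the theorem being asserted. Since this is an external structure theorem that the paper itself does not reprove, the appropriate resolution is to cite \cite[Chapter III.4]{AshMumfordRapoportTai10} for the Siegel-domain realisation and \cite[Proposition 3.2]{KlinglerUllmoYafaev16} for the covering of $\Sigma$, rather than to attempt a self-contained derivation; but you should then present your text as a guide to those references, not as a proof.
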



\section{Holomorphic curves and fundamental domains} 
\label{sec:holomorphic_curves_and_fundamental_domains}

In this section we restrict attention to holomorphic curves; hence, we set $m=1$ and consider holomorphic maps $f:\mathbb{C}\to \mathbb{C}^{n}=\mathfrak{p}$ be a holomorphic map. Maintaining the same notation as in the last section, we fix some $i\in I$ and let $C=f(U_{i}\cap B(0,R_{i}))$.

\begin{definition}
	\label{def:NCT}
	Recall we fixed a faithful finite dimensional representation $\rho:G\to GL(E)$; for any $\gamma\in \Gamma$ write $\rho(\gamma)=(\gamma_{i,j})_{i,j}$. For any $\phi\in \End(E_{\mathbb{R}})$ define
	\begin{equation}
		\abs{\phi}_{\infty}=\max_{i,j}\abs{\phi_{i,j}}.
	\end{equation}
	Moreover, define the \emph{height} of $\gamma\in \Gamma$ as 
	\begin{equation*}
		H(\gamma)=\max(1,\abs{\gamma}_{\infty}).
	\end{equation*}
	Finally, let $T>0$, define
	\begin{equation*}
		N_{C}(T)=\# \left\{ \gamma\in \Gamma \mathrel{|} \gamma \mathscr{F}\cap C\neq\emptyset \text{ and } H(\gamma)\leq T\right\}
	\end{equation*}
\end{definition}

The aim of this section is to prove the following result.

\begin{theorem}
	\label{thm:intersection holomorphic curves and fundamental domains}
	There exist constants $c_{1},c_{2}>0$ such that for all $T>0$ sufficiently large
	\begin{equation*}
		N_{C}(T)\geq c_{1} T^{c_{2}}.
	\end{equation*}
\end{theorem}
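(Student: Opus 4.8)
The plan is to produce many elements $\gamma \in \Gamma$ of controlled height whose translates $\gamma\mathscr{F}$ meet $C$, by exploiting the bounded realisation of $\mathcal{D}$ together with the fact that $C$ accumulates to the boundary $\partial X$. First I would fix a point $p_0 \in U_i \cap B(0,R_i)$ and note that, since $\overline{f(U_i \cap B(0,R_i))} \cap \partial X \neq \emptyset$, the curve $C$ contains a sequence $q_k \to q_\infty \in \partial \mathcal{D}$. Because $\mathscr{F} = J.\Sigma$ is a fundamental set, for every $q_k$ there is some $\gamma_k \in \Gamma$ with $\gamma_k \mathscr{F} \ni q_k$, hence $\gamma_k \mathscr{F} \cap C \neq \emptyset$; the point is to show these $\gamma_k$ are distinct for a positive proportion of indices and, crucially, that $H(\gamma_k)$ grows at most \emph{polynomially} in a natural parameter while the number of distinct $\gamma_k$ captured by height $\leq T$ grows like a power of $T$.

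The key input is the Siegel domain description of Proposition~\ref{prop:proposition 3.2}: near the boundary component $F$ towards which $C$ degenerates, $\mathcal{D}$ is realised as a Siegel domain of the third kind, $\Sigma$ is covered by finitely many $\Theta$'s of the explicit product form, and the action of the arithmetic group on the $U(F)$-coordinate is by a lattice of translations (coming from $\Gamma \cap U(F)$) together with the action of $\Gamma \cap G_l(F)$ on the cone $C(F)$. A point $q = j^{-1}(x,y,t)$ of $C$ lies in the translate $\gamma\mathscr{F}$ where $\gamma$ is essentially determined by which lattice-translate of $U'$ contains $\re(x)$ and by a $G_l(F)$-element pushing $\im(x) + l_t(y,y)$ into $\sigma + a$; as $q \to \partial\mathcal{D}$ the imaginary part goes deep into the cone $C(F)$, so the relevant $\gamma$ involves large unipotent and/or semisimple parts. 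I would then estimate $H(\gamma)$ in terms of the distance of $q$ to the boundary: because the realisation is \emph{bounded}, $q$ itself stays in a fixed compact region of $\mathbb{C}^n$, so the Harish-Chandra coordinates of $q$ along $C$ are controlled, and one gets a polynomial bound $H(\gamma) \leq c\, \delta(q)^{-a}$ for the $\gamma$ sending a fixed base region to a neighbourhood of $q$, where $\delta(q)$ measures proximity to $\partial\mathcal{D}$. Conversely, since $f$ is a non-constant holomorphic map and $q_\infty$ is a genuine boundary point, $\delta(f(z))$ decays no faster than a power of the relevant parameter as $z$ runs over $U_i \cap B(0,R_i)$ (here one uses that $C$ is the image of a one-dimensional disc and standard estimates on holomorphic functions, e.g. the Schwarz lemma / Harnack-type bounds for $-\log\delta$ which is plurisubharmonic), so infinitely many distinct lattice cells are hit and the count $N_C(T)$ is bounded below by a power of $T$.

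I would organise the argument as: (i) reduce to a single boundary component $F$ and a single $\Theta$ containing infinitely many points of $C$ (finiteness of the cover of $\Sigma$ plus pigeonhole); (ii) write down explicitly, using Proposition~\ref{prop:boundary comp decomposition} and the Siegel-domain coordinates, the element $\gamma \in \Gamma$ associated to a point $q \in C \cap \gamma\mathscr{F}$ near $F$, and bound $H(\gamma)$ polynomially in $1/\delta(q)$ via the boundedness of the Harish-Chandra model and the arithmeticity $\Gamma = G(\mathbb{Z})$; (iii) show that, as $z$ ranges over $U_i \cap B(0,R_i)$, the quantity $\delta(f(z))$ takes values in every sufficiently small scale $2^{-k}$ — using that $-\log \delta \circ f$ is a non-constant subharmonic (indeed, here, real-analytic) function on the disc that is unbounded — so that $\gtrsim \log T$ distinct scales, hence $\gtrsim T^{c_2}$ distinct $\gamma$, arise with $H(\gamma) \leq T$; (iv) assemble the lower bound $N_C(T) \geq c_1 T^{c_2}$.

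\textbf{Main obstacle.} The delicate step is (ii)--(iii): making the comparison between the word-length/height $H(\gamma)$ in the arithmetic group and the analytic proximity $\delta(q)$ of the point to the boundary effective and polynomial in both directions. The upper bound on $H(\gamma)$ requires controlling the Levi and unipotent parts in the parabolic $N(F)$ in terms of the Siegel coordinates and translating that into the matrix norm $|\gamma|_\infty$ for the fixed representation $\rho$; the lower bound on the number of scales requires ruling out that $\delta \circ f$ decays super-polynomially fast or oscillates pathologically, which is where one leans on $f$ being holomorphic of one variable and $C$ genuinely reaching $\partial X$ (so $-\log \delta \circ f$ is unbounded above but, being essentially a sum of a pluriharmonic function and $-\log$ of a polynomial in bounded coordinates, grows at a controlled rate). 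Everything else — the pigeonhole reduction to one $\Theta$, and the final assembly — is routine once these estimates are in place.
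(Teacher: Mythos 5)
Your proposal takes a genuinely different route from the paper, but it has a gap at the decisive quantitative step: as written, it yields a \emph{logarithmic}, not polynomial, lower bound on $N_C(T)$.

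The paper's argument is a volume-counting argument. It compares two estimates of $\Vol_C\bigl(C\cap B(x_0,R)\bigr)$: a lower bound $\geq c_9\exp(c_{10}R)$ from the theorem of Hwang--To (Theorem~\ref{thm:Theorem 5.7}), which says the area of a complex analytic curve in a bounded symmetric domain grows \emph{exponentially} with the radius of the geodesic ball; and an upper bound $\leq N_C(T)\cdot c_3$, which follows by covering $B(x_0,R)$ with translates $\gamma\mathscr{F}$ with $H(\gamma)\leq T$ for $R\sim\tfrac1n\log T$ (Lemmas~\ref{lemma:Lemma 5.4}--\ref{lemma:Lemma 5.5}), and then bounding the contribution of each translate by Lemma~\ref{lemma:lemma 5.8} — the paper's key new lemma, asserting $\Vol_{\gamma C}(\gamma C\cap\mathscr{F})\leq c_3$ uniformly in $\gamma$. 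Comparing exponential volume growth with a uniform per-cell bound gives $N_C(T)\gtrsim T^{c_{10}/n}$.

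Your scheme instead tries to directly track, for a sequence $q_k\to\partial\mathcal{D}$ on $C$, the elements $\gamma_k$ with $q_k\in\gamma_k\mathscr{F}$, bounding $H(\gamma_k)\lesssim\delta(q_k)^{-a}$ and arguing that $\delta\circ f$ decays at a controlled rate so that many scales $2^{-k}$ are realized with $H\leq T$. The difficulty is that this, as stated, only produces $\asymp\log T$ distinct scales and hence $\asymp\log T$ distinct $\gamma$'s, not $T^{c_2}$. The jump ``$\gtrsim\log T$ distinct scales, hence $\gtrsim T^{c_2}$ distinct $\gamma$'' is a non sequitur: to upgrade logarithmic to polynomial you must show that at a typical scale the curve crosses \emph{many} ($\sim 2^{ck}$) distinct translates, and that is precisely where some form of the Hwang--To volume growth is indispensable. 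Your proposal never invokes this (or any replacement), and without it even a geodesic ray to the boundary satisfies everything you ask of $C$ yet meets only $\asymp\log T$ translates of the fundamental domain. So the analytic heart of the theorem — why a holomorphic \emph{curve} (as opposed to a real arc) hits polynomially many cells — is missing. You would either need to (i) import the Hwang--To exponential volume lower bound and redo the argument as a volume comparison, in which case you would also need the uniform upper bound of Lemma~\ref{lemma:lemma 5.8}, or (ii) replace it by a direct combinatorial estimate on the number of cells at each scale, which is not sketched and seems at least as hard.

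Smaller points: your step (ii) — bounding $H(\gamma)$ polynomially in $\delta(q)^{-1}$ via the Siegel-domain coordinates — is morally contained in the paper's Lemmas~\ref{lemma:Lemma 5.4} and~\ref{lemma:Lemma 5.5} (via the bounded realisation and $\Gamma=G(\mathbb{Z})$), so that part is on the right track. Also note the paper does not need any pigeonhole reduction to a single boundary component or a single $\Theta$: the uniform volume bound in Lemma~\ref{lemma:lemma 5.8} already sums over the finitely many $\Theta$'s covering $\Sigma$.
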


In the proof of theorem \ref{thm:intersection holomorphic curves and fundamental domains} for algebraic curves given in \cite{KlinglerUllmoYafaev16}, the only part that the relies on the curve being algebraic is the analogue of the following lemma.

\begin{lemma}
	\label{lemma:lemma 5.8}
	There exists a positive constant $c_{3}$ such that for any $\gamma \in \Gamma$
	\begin{equation}
		Vol_{\gamma C}(\gamma C\cap \mathscr{F})\leq c_{3},
	\end{equation}
	where $\Vol_{\gamma C}$ is the volume with respect to the riemaniann metric on $\gamma C$ induced by the metric on $\mathcal{D}$.
\end{lemma}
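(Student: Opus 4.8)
The plan is to exploit the bounded realisation of $\mathcal{D}$ together with the Siegel domain description of Proposition \ref{prop:proposition 3.2}, rather than any algebraicity of $C$. First I would reduce to the case $\gamma = e$: indeed $\Vol_{\gamma C}(\gamma C \cap \mathscr{F}) = \Vol_C(C \cap \gamma^{-1}\mathscr{F})$ since $\gamma$ acts by isometries on $\mathcal{D}$, and $\gamma^{-1}\mathscr{F}$ is again a translate of the fundamental set $\mathscr{F} = J.\Sigma$ by an element of $\Gamma$. So it suffices to bound $\Vol_C(C \cap \gamma\Sigma)$ uniformly in $\gamma \in \Gamma$, up to the fixed finite multiplicity coming from $J$. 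Since the Siegel set $\Sigma$ is covered by finitely many pieces $\Theta$ of the form given in Proposition \ref{prop:proposition 3.2}, it is enough to bound $\Vol_C(C \cap \gamma\Theta)$ for a single such $\Theta$, uniformly in $\gamma$.

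The key point is that $C = f(U_i \cap B(0,R_i))$ lies in a \emph{compact} subset $K$ of the bounded domain $\mathcal{D} \subset \mathfrak{p} \cong \mathbb{C}^n$, because $U_i \cap B(0,R_i)$ is relatively compact in $U_i$ and $f$ is continuous; more precisely $\bar{C}$ is compact in $\mathfrak{p}$ even if it touches $\partial\mathcal{D}$, but the portion meeting $\mathcal{D}$ is what matters. The idea is then: on the one hand, the Riemannian metric on $\mathcal{D}$ induced from the Bergman/Harish-Chandra realisation is comparable, on any region of the form $\gamma\Theta$, to the Euclidean metric of $\mathfrak{p}$ up to a factor controlled by the distance to the boundary; on the other hand, in the Siegel-domain coordinates $(x,y,t)$ the region $\Theta$ is a product-like set on which the invariant metric degenerates in a precise way along the $\im(x)$-directions as one approaches the cusp. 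The curve $C$ is a one-complex-dimensional analytic set, so $\Vol_C(C \cap \gamma\Theta) = \int_{C \cap \gamma\Theta} \omega$ where $\omega$ is the Kähler form of $\mathcal{D}$ restricted to $C$; by Wirtinger this is the area of $C \cap \gamma\Theta$ measured in the metric $g_{\mathcal{D}}$. I would bound this by writing the metric on $\gamma\Theta$, using $\Gamma$-invariance, as the metric on $\Theta$ transported back, and then use that on $\Theta$ the metric is bounded above by a fixed metric $g_0$ on a relatively compact neighbourhood together with the degenerating cusp directions — the crucial observation being that $C$, having bounded image in $\mathfrak{p}$, cannot ``wind around'' the cusp: its preimage $\gamma^{-1}C \cap \Theta$ has $\im(x)$ bounded below (it is in $\Theta$) and the relevant coordinates constrained to the relatively compact $U', Y', F'$, so only finitely much $g_0$-area is available.

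Concretely, I would argue as follows. Fix $\Theta$; suppose $\gamma\Theta \cap C \neq \emptyset$. Pulling back by $\gamma^{-1}$, set $C' = \gamma^{-1}C$, a translate of $C$; then $\Vol_C(C\cap\gamma\Theta) = \Vol_{C'}(C' \cap \Theta)$. Now $C'$ is an analytic curve and $C' \cap \Theta$ lies in the region where $\re(x) \in U'$, $y \in Y'$, $t \in F'$, $\im(x) + l_t(y,y) \in \sigma + a$. The Kähler metric of the Siegel domain of the third kind, in these coordinates, has the shape (schematically) $g \asymp \frac{dx\,d\bar x}{h(\im x + l_t(y,y))^2} + (\text{bounded terms in } dy, dt)$ where $h$ is a positive function on the cone $C(F)$ homogeneous of degree one; in particular on $\Theta$ the $x$-part of the metric is \emph{bounded above} since $\im(x)+l_t(y,y)$ stays in $\sigma + a$ away from $0$. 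Hence $g|_\Theta \leq g_0$ for a fixed smooth metric $g_0$ on a relatively compact set $\bar\Theta'$ (using relative compactness of $U', Y', F'$ and the fact that the cone is cut off). Therefore $\Vol_{C'}(C'\cap\Theta) \leq \int_{C'\cap\Theta} \omega_0$, and since $C'$ is a complex curve this is its Euclidean-type area inside a fixed bounded region. The last step — which is the main obstacle — is to bound this area independently of the translate: for this I would use that $C'$ is a branch of an analytic curve of the form $f$ composed with the covering, with a \emph{uniform bound on geometric complexity}, e.g. by invoking that $f(U_i\cap B(0,R_i))$ has finite area in $\mathfrak{p}$ and that translating by $\gamma \in \Gamma \subset GL(E_{\mathbb{Z}})$ while staying bounded forces the relevant portion of $C'$ to be a bounded piece — so one reduces to: a complex analytic curve of bounded Euclidean area in $\mathbb{C}^n$ has bounded area inside any fixed relatively compact open set, which is immediate. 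The genuinely delicate part is justifying that only boundedly many translates $\gamma$ have $\gamma\Theta \cap C \neq \emptyset$ with each contributing boundedly, i.e. controlling how the single curve $C$ distributes among the cusp neighbourhoods; this is exactly where the boundedness of the realisation (forcing $f$ to have bounded image hence the curve not to escape to the cusp except in a controlled way) is indispensable, and I expect this estimate, via the explicit metric on the Siegel domain of the third kind, to be the crux of the argument.
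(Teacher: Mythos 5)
Your reduction to the Siegel pieces $\Theta$ and the use of Mumford's metric comparison are on the right track, but there are two real problems, and the second is fatal.

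First, a technical flaw: in the Siegel-domain coordinates $(x,y,t)$ of Proposition~\ref{prop:proposition 3.2}, the set $\Theta$ is \emph{not} relatively compact — the condition $\im(x)+l_t(y,y)\in\sigma+a$ leaves $\im(x)$ unbounded in the cone direction, which is precisely the cusp. So there is no ``relatively compact set $\bar\Theta'$'' on which a fixed smooth metric $g_0$ dominates $\omega_F$, and the step $\Vol_{C'}(C'\cap\Theta)\le\int_{C'\cap\Theta}\omega_0$ with $\omega_0$ a Euclidean-type form on a bounded region does not make sense. Correspondingly, ``a curve of bounded Euclidean area has bounded area in a fixed relatively compact open set'' cannot be invoked, because the region of integration is unbounded and $\gamma^{-1}C$ does not have Euclidean area bounded uniformly in $\gamma$ (the isometries of $\mathcal{D}$ are not Euclidean isometries, so translating $C$ can blow up its Euclidean area).

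Second, and more importantly, the step you yourself flag as ``the main obstacle'' and ``the genuinely delicate part'' is exactly the content of the lemma, and your sketch does not supply it. Note also that the lemma asks for a bound on $\Vol_{\gamma C}(\gamma C\cap\mathscr{F})$ uniform in $\gamma$, not a bound on how many $\gamma$ meet $C$; the last paragraph of your proposal conflates these. The paper's actual mechanism is different: it bounds, uniformly in $\gamma$, the \emph{multiplicity} of the coordinate projections $p_{g,w}\colon g.C\to\mathbb{C}$ for each Siegel-domain coordinate $w$, by observing that the family of fibres $f^{-1}(p_{g,w}^{-1}(w_0))$ is a definable family in $\mathbb{R}_{an}$ (this is precisely where the boundedness of $\mathcal{D}$ enters, via restricted analyticity) and invoking uniform finiteness from the cell-decomposition theorem. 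Once the multiplicity is bounded by a constant independent of $\gamma$, one pushes forward: $\int_{\gamma C\cap\Theta}\omega_F$ is controlled by the multiplicity times the area of $p_w(\Theta)$ in the relevant metric, which is finite because $p_{x_i}(\Theta)$ lies in a finite union of fundamental domains in the upper half-plane (finite hyperbolic area, not finite Euclidean area — this is why the unboundedness of $\Theta$ is harmless) and $p_{y_j}(\Theta)$, $p_{f_k}(\Theta)$ are relatively compact. This o-minimal uniform-finiteness idea is the crux, and it is absent from your proposal.
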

\begin{proof}
	By definition, $\mathscr{F}= J. \Sigma$ for a finite subset $J\subset G(\mathbb{Q})$ and some Siegel subset $\Sigma\subset \mathcal{D}$. Hence it is sufficient to prove the theorem for the Siegel set $\Sigma$. In turn, every Siegel set is covered by a finite number of open subsets $\Theta$ as in proposition \ref{prop:proposition 3.2}, so it is sufficient to prove that
	\begin{equation}
		\Vol_{\gamma C}(\gamma C\cap \Theta)\leq c_{4}
	\end{equation}
	for some constant $c_{4}>0$.

	Let $\omega$ be the natural Kähler form on $\mathcal{D}$, then
	\begin{equation}
		Vol_{\gamma C}(\gamma C \cap \Theta)= \int_{\gamma C \cap \Theta} \omega
	\end{equation}
	On $\mathcal{D}_{F}$ we have the Poincaré metric defined by
	\begin{equation}
		\omega_{F}=\sum \frac{dx_{1}\wedge d\bar{x_{i}}}{\im(x_{i})^{2}}+\sum dy_{j}\wedge d\bar{y_{j}}+\sum df_{k}\wedge d\bar{f_{k}}.
	\end{equation}
	By a result of Mumford \cite[Theorem 3.1]{Mumford77}, there is a constant $c_{5}$ such that on $\mathcal{D}$
	\begin{equation}
		\omega\leq c_{5} \omega_{F}.
	\end{equation}
	Hence
	\begin{equation}
		Vol_{\gamma C}(\gamma C \cap \Theta)\leq \int_{\gamma C \cap \Theta} \omega_{F}.
	\end{equation}

	Now let $w$ be a coordinate between $x_i$, $y_j$ or $f_k$ and  $g\in G(\mathbb{R})$, denote by $p_{g,w}:g. C\to \mathbb{C}$ the projection to the $w$ axis. Let $w_0\in \mathbb{C}$ and define 
	\begin{equation}
	n_{g.C,w}(w_0)	= \text{ number of points in } g.C\cap p_{w}^{-1}(w_{0}) \text{ counted with multiplicity}.
	\end{equation}
	Consider the set 
	\begin{equation}
		W = \left\{ (z_{0},g,w_{0})\in U_{i}\cap B(0,R_{i})\times G(\mathbb{R})\times\mathbb{C} \mathop{|} p_{g,w}\circ f(z_{0})=w_{0} \right\}.
	\end{equation}
	We observe that, since $\mathcal{D}$ is bounded and $C$ is an analytic subset of $\mathcal{D}$, the projection $p_{g,w}$ is definable in $\mathbb{R}_{an}$. This along with the definablity of all other maps involved, implies that $S$ is a definable family over $G(\mathbb{R})\times\mathbb{C}$. It is a consequence of the cell decomposition theorem (cf. \cite[Chapter 3, Corollary 3.6]{Dries98}) that the number of definably connected components of the fibres of a definable set, hence, in this case, their cardinality, is uniformly bounded by a constant $c_{w}$. We now observe that the fibre of $W$ over a point $(g,w_{0})\in G(\mathbb{R})\times \mathbb{C}$ is the set $f^{-1}(p_{g,w}^{-1}(w_{0}))$ whose cardinality is exactly $n_{g.C,w}(w_{0})$. Hence
	\begin{equation}
	n_{\gamma C,w}(w_{0})\leq c_{w}	
	\end{equation}
	for all $w_{0}\in \mathbb{C}$ and all $\gamma\in \Gamma$. Let $c_{6}$ be the maximum of $c_{w}$ with $w$ equal to $x_{i}$, $y_{j}$ or $f_{k}$, then
	\begin{equation}
		\begin{split}
		\Vol_{\gamma C}(\gamma C \cap \Theta) &\leq c_{5} \left( \vphantom{\int_{p_{x_i}(\Theta)}} \right. \sum  \int_{p_{x_i}(\Theta)} n_{\gamma C }\left(p_{x_{i}}^{-1}(x_{i})\right)\,\frac{dx_i\wedge d \bar{x_{i}}}{\im(x_{i})^2}\ + \\
		&\quad\sum \int_{p_{y_j}(\Theta)} n_{\gamma C }\left(p_{y_{j}}^{-1}(y_{j})\right)\,d y_{j}\wedge d \bar{y_{j}}\ +\\
		&\quad\sum \left. \int_{p_{f_k}(\Theta)} n_{\gamma C }\left(p_{f_{k}}^{-1}(f_{k})\right)\,df_{k}\wedge d \bar{f_k} \right) \\
		&\leq c_{5}c_{6} \left( \sum  \int_{p_{x_i}(\Theta)} \frac{dx_i\wedge d \bar{x_{i}}}{\im(x_{i})^2}+ \sum \int_{p_{y_j}(\Theta)} d y_{j}\wedge d \bar{y_{j}} + \right.\\
		&\left. \hspace{20pt} \int_{p_{f_k}(\Theta)} df_{k}\wedge d \bar{f_k} \right) 
		\end{split}
	\end{equation}
	Now we observe that from the description of $\Theta$, the projection $p_{x_i}(\Theta)$ is contained in a finite union of usual fundamental domains in the upper half plane, which have finite hyperbolic area. Moreover, if $w$ is one of $y_{j}$ or $f_{k}$, then, again from the description of $\Theta$, it follows that $p_{w}(\Theta)$ is relatively compact in the plane and hence has finite Euclidean area.
\end{proof}

This result allows us to follow the proof used in \cite{KlinglerUllmoYafaev16} for algebraic curves and apply it to our o-minimal setting. For the convenience of the reader, we briefly recall the proof of theorem \ref{thm:intersection holomorphic curves and fundamental domains}. First we report some results from \cite{KlinglerUllmoYafaev16}.

\begin{lemma}[{\cite[Lemma 5.4]{KlinglerUllmoYafaev16}}]
	\label{lemma:Lemma 5.4}
	Let $x_{0}\in \mathcal{D}$ be a base point. There exists a constant $c_{7}$ such that for any $g\in G(\mathbb{R})$ the following inequality holds
	\begin{equation}
		\log \left( c_{7}\abs{g}_{\infty} \right) \leq d(g.x_{0},x_{0}).
	\end{equation}
\end{lemma}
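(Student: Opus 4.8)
\textbf{Proof plan for Lemma~\ref{lemma:Lemma 5.4}.}

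The plan is to exploit the fact that $\mathcal{D} = G(\mathbb{R})^+/K$ for a maximal compact $K$ stabilising $x_0$, together with the equivalence of norms on the finite-dimensional space $\End(E_\mathbb{R})$, to reduce the inequality to a standard comparison between the Riemannian distance on a symmetric space and the operator norm of the group element acting on a faithful representation. First I would recall that, since $\rho$ is faithful and $E_\mathbb{Z}$ is $\Gamma$-stable, the entries $|g|_\infty = \max_{i,j}|\rho(g)_{i,j}|$ define a norm on $\rho(G(\mathbb{R})) \subset GL(E_\mathbb{R})$ comparable to the operator norm $\|\rho(g)\|$; hence it suffices to bound $\log\|\rho(g)\|$ (up to an additive constant and a multiplicative constant absorbed into the logarithm) by $d(g.x_0, x_0)$.

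Next I would use the Cartan (polar) decomposition $g = k_1 a k_2$ with $k_1, k_2 \in K$ and $a$ in a closed positive Weyl chamber of a maximal $\mathbb{R}$-split torus. Since $K$ is compact, $\rho(K)$ is bounded, so $\|\rho(g)\|$ and $\|\rho(a)\|$ differ by a bounded multiplicative factor; likewise $d(g.x_0, x_0) = d(a.x_0, x_0)$ because $k_2$ fixes $x_0$ and $k_1$ is an isometry fixing $x_0$. For the split torus element $a = \exp(H)$, the distance $d(a.x_0, x_0)$ equals (a fixed positive multiple of) $\|H\|$ in a suitable Euclidean norm on the Lie algebra of the torus, while $\|\rho(a)\|$ is $e^{\lambda(H)}$ where $\lambda$ is the highest weight of $\rho$ appearing with $H$ in the chamber; since $\lambda(H) \leq C\|H\|$ for a constant depending only on $\rho$, one gets $\log\|\rho(a)\| \leq C' d(a.x_0, x_0)$, and reabsorbing constants yields $\log(c_7 |g|_\infty) \leq d(g.x_0, x_0)$ for a suitable $c_7 > 0$.

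The main obstacle, and the only genuinely delicate point, is making the comparison between $\log\|\rho(a)\|$ and $d(a.x_0,x_0)$ uniform and in the correct direction: one needs that the largest weight of $\rho$ grows at most linearly in the chamber coordinate, which is automatic for a fixed representation, but care is required because the constant $c_7$ must be chosen to work for \emph{all} $g$, including those far out in every direction of the chamber. Everything else is bookkeeping with the compactness of $K$ and the equivalence of norms. In fact, since the statement is only asserted as an existence of \emph{some} constant $c_7$, one may alternatively cite the standard fact (e.g.\ from reduction theory, as in the references of \cite{KlinglerUllmoYafaev16}) that on any arithmetic quotient the matrix norm of a group element is bounded above by an exponential of the distance moved from a base point; I would present the self-contained Cartan-decomposition argument above as the proof.
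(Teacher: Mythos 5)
The paper does not prove this lemma; it is quoted verbatim from \cite[Lemma 5.4]{KlinglerUllmoYafaev16} with no argument given, so there is no ``paper's own proof'' to compare against. Your Cartan-decomposition argument is the standard route: write $g = k_1 a k_2$ with $a=\exp H$, $H$ in a closed positive chamber of $\mathfrak{a}$; use compactness of $K$ and equivalence of norms on $\End(E_\mathbb{R})$ to reduce to $a$; note $d(a.x_0,x_0)=\|H\|$ and $\log\|\rho(a)\| = \lambda(H)$ for the highest restricted weight $\lambda$. All of that is correct.

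The one genuine gap is the final ``reabsorbing constants'' step. What your argument actually yields is
\begin{equation*}
\log|g|_\infty \ \leq\ C_0 + \|\lambda\|\cdot\|H\| \ =\ C_0 + \|\lambda\|\cdot d(g.x_0,x_0),
\end{equation*}
where $\|\lambda\| = \sup_{H\in\mathfrak{a}^+,\,\|H\|=1}\lambda(H)$ depends on $\rho$ and the metric normalization and has no reason to be $\leq 1$. A \emph{multiplicative} constant on $d(g.x_0,x_0)$ cannot be absorbed into the additive $\log c_7$, so your proof as written establishes a weaker statement than the one displayed in the lemma. This is harmless for the application (in the proof of Theorem \ref{thm:intersection holomorphic curves and fundamental domains} one only needs \emph{some} polynomial lower bound $N_C(T)\geq c_1 T^{c_2}$, and the argument there goes through verbatim if one replaces the radius $R$ by $R/\|\lambda\|$; equivalently, one may rescale the metric on $\mathcal{D}$ once and for all). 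But you should either state the lemma in the form $\log|g|_\infty \leq C_0 + C_1\, d(g.x_0,x_0)$, or explicitly normalize the metric so that $\|\lambda\|\leq 1$, rather than asserting the coefficient $1$ appears for free.
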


\begin{lemma}[{\cite[Lemma 5.5]{KlinglerUllmoYafaev16}}]
	\label{lemma:Lemma 5.5}
	Let $\mathscr{F}$ be the fundamental domain for the action of $\Gamma$ fixed in the previous section. There exists a positive constant $c_{8}$ such that for all $\gamma\in \Gamma$ and for all $u\in \gamma \mathscr{F}$
	\begin{equation}
		H(\gamma)\leq c_{8} \abs{u}_{\infty}^{n}.
	\end{equation}
\end{lemma}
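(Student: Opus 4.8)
The plan is to reduce to the Siegel set $\Sigma$ and then track how the norm $\abs{u}_\infty$ of a point $u\in\gamma\mathscr F$ controls the distance from a base point, which in turn controls $\abs{\gamma}_\infty$ via Lemma \ref{lemma:Lemma 5.4}. First I would use $\mathscr F=J.\Sigma$ with $J\subset G(\Q)$ finite: since $J$ is finite, the operator norms $\abs{g}_\infty$ for $g\in J$ are bounded, so it suffices to prove the inequality with $\mathscr F$ replaced by $\Sigma$, at the cost of enlarging the constant $c_8$. So fix $\gamma\in\Gamma$ and $u\in\gamma.\Sigma$, and write $u=\gamma s$ with $s\in\Sigma$.

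Next I would pick the base point $x_0$ inside $\Sigma$ (or in $\bar\Sigma$), so that the distance $d(s,x_0)$ stays uniformly bounded as $s$ ranges over a ``bounded direction'' of $\Sigma$ — but of course $\Sigma$ is not relatively compact, so $d(s,x_0)$ is not bounded in general. The key point is instead to compare $d(s,x_0)$ with $\abs{u}_\infty=\abs{\gamma s}_\infty$: because $\mathcal D$ is realised as a \emph{bounded} domain in $\mathfrak p\cong\C^n$, the map $\mathcal D\to\C^n$ has image of bounded Euclidean norm, and the Bergman/Poincaré-type metric on $\mathcal D$ blows up near the boundary only logarithmically in a way that matches the action of $G(\R)$ on $E$. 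Concretely, one uses a comparison of the form $d(\gamma s, x_0)\le d(\gamma s,\gamma x_0)+d(\gamma x_0,x_0) = d(s,x_0)+d(\gamma.x_0,x_0)$, and then the two terms are estimated separately: the distance $d(\gamma.x_0,x_0)$ is bounded below by $\log(c_7\abs{\gamma}_\infty)$ via Lemma \ref{lemma:Lemma 5.4}, while one shows that, for $u$ in a (translate of a) Siegel set, $d(u,x_0)$ is bounded above by a constant times $\log\abs{u}_\infty$ plus a constant, using the explicit Siegel-coordinate description of $\Sigma$ from Proposition \ref{prop:proposition 3.2} together with the fact that the embedding into $\C^n$ is bounded. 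Chaining these gives $\log(c_7\abs\gamma_\infty)\le d(\gamma.x_0,x_0)\le d(s,x_0)+d(u,x_0)$, and since $s$ ranges over $\Sigma$ while tracking $u=\gamma s$, one manipulates to bound $\log H(\gamma)$ by $n\log\abs u_\infty+O(1)$, i.e.\ $H(\gamma)\le c_8\abs u_\infty^n$.

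A cleaner route, which I would probably prefer, avoids metric estimates entirely: write $u=\gamma s$ and hence $\gamma=u s^{-1}$ as elements acting on $E$, so $\abs\gamma_\infty=\abs{u s^{-1}}_\infty\le \abs u_\infty\,\abs{s^{-1}}_\infty$ up to the dimensional constant coming from matrix multiplication. Thus it suffices to bound $\abs{s^{-1}}_\infty$ uniformly for $s\in\Sigma$ — but this is false, since $\Sigma$ is unbounded. The fix is to also exploit the other factor: on a Siegel set one has $\abs{s^{-1}}_\infty \le c\,\abs s_\infty^{n-1}$ or a similar polynomial relation between a group element and its inverse acting on $E$ (coming from Cramer's rule, as $\det\rho(s)$ is bounded below on $G$), and $\abs s_\infty$ is itself comparable to $\abs u_\infty$ up to a bounded factor because $u$ and $s$ lie in the same $G(\R)$-orbit direction controlled by the boundedness of $\mathcal D$. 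Balancing the exponents yields the stated power $n$.

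The main obstacle is the unboundedness of the Siegel set: one cannot simply say ``$s$ lies in a compact set''. Everything hinges on quantifying how the unbounded directions of $\Sigma$ — which, by Proposition \ref{prop:proposition 3.2}, are exactly the $\im(x_i)\to\infty$ and the cone directions — interact with the representation $\rho$ and with the \emph{bounded} Harish-Chandra realisation, so that growth of $\abs s_\infty$ is matched by growth of $\abs u_\infty=\abs{\gamma s}_\infty$ rather than cancelled by it. I expect this matching, and the bookkeeping of the exponent to land exactly at $n$, to be the delicate part; the finite-set reduction and the matrix-norm submultiplicativity are routine.
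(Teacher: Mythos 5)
The paper does not give its own proof of this lemma; it is quoted directly from \cite[Lemma~5.5]{KlinglerUllmoYafaev16}, so there is no ``paper proof'' to compare against. Evaluating your proposal on its own terms: the algebraic ``cleaner route'' is the right shape --- reduce to a single Siegel set via the finiteness of $J$, write $\gamma=us^{-1}$ with $s\in\Sigma$, use submultiplicativity of $\abs{\cdot}_\infty$, and control $\abs{s^{-1}}_\infty$ by $\abs{s}_\infty^{\dim E-1}$ via Cramer's rule and the bounded determinant of $\rho$ on the semisimple group $G$ --- and the metric route is essentially a repackaging of it, so I will concentrate on the gaps common to both.

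The step you flag as ``delicate'' is in fact the entire content of the lemma and you do not supply it. You need $\abs{s}_\infty\le c\,\abs{u}_\infty$ uniformly for $s\in\Sigma$ and $\gamma\in\Gamma$ with $u=\gamma s$, but the justification you offer --- that ``$u$ and $s$ lie in the same $G(\R)$-orbit direction controlled by the boundedness of $\mathcal D$'' --- is incorrect. The Harish-Chandra realisation bounds points of $\mathcal D$, not matrix norms of group elements; $\abs{s}_\infty$ is genuinely unbounded on $\Sigma$. What actually forces $\abs{s}_\infty\lesssim\abs{u}_\infty$ is a combination of (i) the \emph{integrality} of $\rho(\gamma)$ (so it cannot shrink the dominant column of $\rho(s)$), and (ii) the \emph{reduction-theoretic structure} of the Siegel set (the Cartan/horospherical coordinates of $s$ satisfy the Siegel inequalities, so the large singular value of $\rho(s)$ is read off a specific column, and an integral unimodular matrix cannot cancel it). Invoking the boundedness of $\mathcal D$ here confuses this lemma with Lemma~\ref{lemma:lemma 5.8}, which is where boundedness is genuinely used.

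Your first (metric) route has the same gap plus an additional one: you cite Lemma~\ref{lemma:Lemma 5.4} to bound $d(u.x_0,x_0)$ from \emph{above} by a multiple of $\log\abs{u}_\infty$, but Lemma~\ref{lemma:Lemma 5.4} only gives the \emph{lower} bound $\log(c_7\abs{g}_\infty)\le d(g.x_0,x_0)$. The converse estimate (which is true for semisimple $G$ via the Cartan decomposition, with a factor $\dim E-1$) would have to be stated and proved separately. Moreover the chain $d(\gamma.x_0,x_0)\le d(s.x_0,x_0)+d(u.x_0,x_0)$ leaves $d(s.x_0,x_0)$ uncontrolled --- $\Sigma$ is unbounded, and bounding $\abs{s}_\infty$ via $\abs{\gamma^{-1}}_\infty$ and $\abs{u}_\infty$ re-introduces $\abs{\gamma}_\infty$ and makes the argument circular for $\dim E\ge 2$. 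Finally, you switch between treating $u$ as a point of $\mathcal D$ (``$u\in\gamma.\Sigma$'') and as a group element (``$\gamma=us^{-1}$''); the former reading would make $\abs{u}_\infty$ bounded and the lemma vacuous, so $u$ must be an element of $G(\R)$ (with $u.x_0\in\gamma\mathscr F$), and the writeup should commit to that from the start.
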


\begin{theorem}[{\cite{HwangTo02}}]
	\label{thm:Theorem 5.7}
	Let $C$ be a complex analytic curve in $\mathcal{D}$. For any point $x_{0}\in C$ there exist positive constants $c_{9}$ and $c_{10}$ such that for any positive real number $R$ one has
	\begin{equation}
		\Vol_{C}(C\cap B(x_{0},R))\geq c_{9}\exp \left( c_{10} R\right) 
	\end{equation}
	where $\Vol_{C}$ is the volume with respect to the riemaniann metric on $C$ induced by the metric on $\mathcal{D}$ and $B(x_{0},R)$ is the geodesic ball in $\mathcal{D}$ of center $x_{0}$ and radius $R$.
\end{theorem}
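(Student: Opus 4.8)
The statement is the Hwang--To exponential volume lower bound, and my plan is a comparison--geometry argument combining two ingredients: the induced metric on a complex analytic curve has Gaussian curvature bounded above by a negative constant, because it inherits the negatively pinched holomorphic sectional curvature of $\mathcal D$; and the natural K\"ahler form $\omega$ of $\mathcal D$ admits a global potential, the logarithm of the Bergman kernel, whose sublevel sets exhaust $\mathcal D$ and are comparable to geodesic balls. The reason to use this potential rather than the distance function $\rho=d_{\mathcal D}(x_{0},\cdot)$ itself is that in higher rank the Hessian of $\rho$ on $TC$ is not bounded by a negative constant (there are totally geodesic flats), whereas the potential is globally well behaved.

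First I would make the reductions and set up the curvature bound. For the conclusion to be meaningful $C$ must be unbounded in the Bergman metric, $\sup_{C}\rho=\infty$ --- otherwise $\Vol_{C}(C\cap B(x_{0},R))$ stabilises for large $R$ --- which holds in the intended application, where $\overline{f(U_{i}\cap B(0,R_{i}))}$ meets $\partial X$, so $f$ approaches $\partial\mathcal D$ and $\rho\to\infty$; I would also take $C$ to be closed in $\mathcal D$, a genuine one--dimensional analytic subset, so that exhaustion functions of $\mathcal D$ restrict to exhaustion functions of $C$. Being a complex submanifold, the smooth locus of $C$ is calibrated by $\omega$, hence minimal, and the Gauss equation for complex submanifolds gives, for a unit $e\in T_{p}C$, $K_{C}(p)=H_{\mathcal D}(e)-2|\sigma(e,e)|^{2}\le H_{\mathcal D}(e)$, where $H_{\mathcal D}$ is the holomorphic sectional curvature and $\sigma$ the second fundamental form. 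Since $\mathcal D$ is a bounded symmetric domain, $H_{\mathcal D}\le-a^{2}$ for some $a>0$ (\cite[Chapter 4]{Mok89}); hence $K_{C}\le-a^{2}$ on the smooth locus of $C$.

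The core of the argument would then run as follows. Let $\varphi$ be (a suitable positive multiple of) $\log K_{\mathcal D}(z,z)$, normalised so that $\omega=i\partial\bar\partial\varphi$; it is a smooth strictly plurisubharmonic exhaustion of $\mathcal D$, and from the boundary asymptotics of the Bergman kernel together with the comparison between the Bergman distance and $-\log\operatorname{dist}(\cdot,\partial\mathcal D)$ (in the spirit of \cite[Lemma 2.4]{UllmoYafaev14}) one gets $\varphi\ge c'\rho-c''$ on $\mathcal D$, so $\{\varphi<s\}\subseteq B(x_{0},(s+c'')/c')$. It therefore suffices to bound from below $\nu(s):=\Vol_{C}(C\cap\{\varphi<s\})$. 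Put $u:=\varphi|_{C}$, a plurisubharmonic exhaustion of $C$ with $i\partial\bar\partial u=\omega|_{C}$; in particular the metric Laplacian $\Delta u$ is a positive constant. For a regular value $s$ of $u$ set $\Omega_{s}=\{u<s\}$ and $L(s)=\operatorname{length}(\partial\Omega_{s})$: the divergence theorem gives $\nu(s)=\kappa\int_{\partial\Omega_{s}}|\nabla u|\,d\ell$ for a fixed $\kappa>0$, the coarea formula gives $\nu'(s)=\int_{\partial\Omega_{s}}|\nabla u|^{-1}\,d\ell$, so by Cauchy--Schwarz $\kappa^{-1}\nu(s)\nu'(s)\ge L(s)^{2}$, and the linear isoperimetric inequality for surfaces of Gaussian curvature $\le-a^{2}$, applied componentwise to $\Omega_{s}$, gives $L(s)\ge a\,\nu(s)$. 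Combining, $(\log\nu)'(s)\ge\kappa a^{2}$ for almost every $s>\varphi(x_{0})$, hence $\nu(s)\ge\nu(s_{0})e^{\kappa a^{2}(s-s_{0})}$ for $s\ge s_{0}>\varphi(x_{0})$ (with $\nu(s_{0})>0$ since $x_{0}\in C$); substituting $s=c'R-c''$ yields $\Vol_{C}(C\cap B(x_{0},R))\ge c_{9}e^{c_{10}R}$ with $c_{10}=\kappa a^{2}c'>0$ and a suitable $c_{9}>0$ (shrinking $c_{9}$ to cover the bounded range of $R$).

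The step I expect to be the main obstacle is the comparison $\varphi=\log K_{\mathcal D}\asymp\rho$ used above, i.e.\ the boundary asymptotics of the Bergman kernel along the (stratified) boundary of a bounded symmetric domain; everything else is either standard comparison geometry (the bound $K_{C}\le-a^{2}$, the linear isoperimetric inequality) or a one--variable differential inequality. A more hands--on substitute for that comparison is to use the Siegel--domain description of $\mathcal D$ near a boundary component (Proposition \ref{prop:proposition 3.2}) together with Mumford's metric estimate \cite[Theorem 3.1]{Mumford77}, already invoked in the proof of Lemma \ref{lemma:lemma 5.8}, to see directly that near $\partial\mathcal D$ the K\"ahler potential grows like the distance.
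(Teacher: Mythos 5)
The paper does not prove this statement at all --- it is quoted as an external result of Hwang--To \cite{HwangTo02} --- so your argument can only be judged on its own terms. Its architecture (exhaust $C$ by sublevel sets of a global K\"ahler potential comparable to the distance, then combine Stokes, the coarea formula and a linear isoperimetric inequality into the differential inequality $(\log\nu)'\geq \mathrm{const}>0$) is sound and close in spirit to how such exponential volume bounds are proved, and the computation $K_{C}\leq H_{\mathcal D}\leq -a^{2}$ via the Gauss equation is correct.

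The genuine gap is the step ``the linear isoperimetric inequality for surfaces of Gaussian curvature $\leq -a^{2}$, applied componentwise to $\Omega_{s}$''. The inequality $L(\partial\Omega)\geq a\,\mathrm{Area}(\Omega)$ is a theorem about domains in \emph{simply connected} surfaces of curvature $\leq -a^{2}$; for domains with nontrivial topology it fails badly (a hyperbolic once-punctured torus truncated along a horocycle of length $\varepsilon$ has area close to $2\pi$ and boundary length $\varepsilon$). The components of $\Omega_{s}=\{u<s\}\subset C$ are compact surfaces with boundary whose Euler characteristic you do not control --- strict subharmonicity of $u$ rules out local maxima but gives no lower bound on $\chi(\Omega_{s})$ --- so the componentwise application is unjustified, and Gauss--Bonnet cannot rescue it without a bound on $\int_{\partial\Omega_{s}}k_{g}$. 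The repair is to obtain the linear isoperimetric inequality not from curvature but from the boundedness of a primitive of $\omega$, i.e.\ from the K\"ahler hyperbolicity (in Gromov's sense) of bounded symmetric domains: one has $\omega=d\eta$ with $\eta=d^{c}\varphi$ and $\sup|\eta|_{\omega}\leq\kappa$ (equivalently $|d\varphi|_{\omega}\leq\kappa$, checked directly on the disc and polydisc and standard for the Bergman potential of a bounded symmetric domain), whence for \emph{any} compact piece $\Omega$ of a complex curve, Stokes gives $\mathrm{Area}(\Omega)=\int_{\Omega}\omega=\int_{\partial\Omega}\eta\leq\kappa\,L(\partial\Omega)$ with no topological hypothesis. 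Substituting this for the curvature-based inequality, the rest of your differential-inequality argument --- including the comparison $\varphi\geq c'\rho-c''$, which is correct in the direction you need --- goes through.
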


We can now finish the proof of theorem \ref{thm:intersection holomorphic curves and fundamental domains}.

\begin{proof}[Proof of Theorem \ref{thm:intersection holomorphic curves and fundamental domains}]
	Choose a base point $x_{0}\in C$, let $c_{7}$ and $c_{8}$ the constants given by lemma \ref{lemma:Lemma 5.4} and lemma \ref{lemma:Lemma 5.5} and consider the intersection $C\cap B(x_{0},R)$ of $C$ with the geodesic ball of centre $x_{0}$ and radius $R=\log \left( \frac{c_{7}}{{c_{8}}^{1/n}}T^{1/n} \right) $. On the one hand, we have by theorem \ref{thm:Theorem 5.7}
	\begin{equation}
		\Vol_{C}(C\cap B(x_{0},R))\geq \frac{c_{7}c_{9}}{{c_{8}}^{\sfrac{1}{n}}} T^{\frac{c_{10}}{n}}.
	\end{equation}
	On the other hand, by lemma \ref{lemma:Lemma 5.4} and lemma \ref{lemma:Lemma 5.5}
	\begin{equation}
	\begin{split}
		B(x_{0},\log R)	&\subseteq \left\{ g\cdot x_{0} \mathop{|} g\in G(\mathbb{R}), \abs{g}_{\infty}\leq \sfrac{T^{\sfrac{1}{n}}}{c_{8}^{\sfrac{1}{n}}} \right\}\\
						&\subseteq \bigcup_{\mathclap{\substack{\gamma\in \Gamma\\ H(\gamma)\leq T}}} \gamma \mathcal{F}.
	\end{split}
	\end{equation}
	Hence, by lemma \ref{lemma:lemma 5.8}
	\begin{equation}
		\Vol_{C}(C\cap B(x_{0},\log R))	\leq \sum_{\mathclap{\substack{\gamma\in \Gamma\\ \gamma \mathcal{F}\cap C\neq \emptyset \\ H(\gamma)\leq T}}} \Vol_{\gamma^{-1} C}(\gamma^{-1} C\cap \mathcal{F} )
		\leq N_{C}(T) c_{3}
	\end{equation}
	We conclude comparing the lower bound and the upper bound
	\begin{equation}
		\frac{c_{7}c_{9}}{{c_{8}}^{\sfrac{1}{n}}} T^{\frac{c_{10}}{n}}\leq N_{C}(T) c_{3}
	\end{equation}
\end{proof}



\section{Proof of Theorem \ref{thm:thm 2.3}} 
\label{sec:proof_of_theorem_ref}

We use the same notation as in section \ref{sec:preliminaries}; that is, we let $f:\mathbb{C}^{m}\to \mathbb{C}^{n}$ be an holomorphic map such that $f(\mathbb{C}^{m})\cap \mathcal{D}\neq \emptyset$. $U_{i}$ are the connected components of $f^{-1}(\mathcal{D})$. Fix an $i$ and set $U=U_{i}$; let $R>0$ be such that $f(U\cap B(0,R))\cap \de X\neq \emptyset$. Let $V=f(U\cap B(0,R))$. Finally, let $W=\Zar(\pi(V))\subset S$.

First of all we note that, by definition, $V$ is definable in the o-minimal structure $\mathbb{R}_{an}$.

\begin{lemma}
	\label{lemma:Sigma}
	Consider the set 
	\begin{equation}
		\Sigma(W)=\left\{ g\in G(\mathbb{R}) \mathop{|} \dim(g.V\cap \mathcal{F}\cap \pi^{-1}(W))= \dim (V) \right\}.
	\end{equation}
	Then the set $\Sigma(W)$ is definable in $\mathbb{R}_{an,exp}$. For all $g\in \Sigma(W)$, $g.V\subseteq \pi^{-1}(W)$. Moreover, define
	\begin{equation}
		\Sigma'(W)=\left\{ g\in G(\mathbb{R}) \mathop{|} V\cap g^{-1}.\mathcal{F}\neq \emptyset \right\}.
	\end{equation}
	Then
	\begin{equation}
		\Sigma(W)\cap \Gamma= \Sigma'(W)\cap \Gamma.
	\end{equation}
\end{lemma}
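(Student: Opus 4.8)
The plan is to establish the three assertions of the lemma in turn, the first being the technical heart.

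\textbf{Definability of $\Sigma(W)$.} First I would unwind the definition. The set $V = f(U\cap B(0,R))$ is definable in $\mathbb{R}_{an}$, and $\mathcal{F} = J.\Sigma$ is a finite union of translates of a Siegel set, which by Proposition \ref{prop:proposition 3.2} is covered by finitely many definable pieces $\Theta$; hence $\mathcal{F}$ is definable in $\mathbb{R}_{an,exp}$ (the $\exp$ is needed because Siegel sets involve the exponential map in the torus directions). The action map $G(\mathbb{R})\times \mathfrak{p}\to \mathfrak{p}$ is semialgebraic (it is the restriction of a rational action on the compact dual), so $\{(g,x) : x\in g.V\cap \mathcal{F}\}$ is definable. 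The subtle point is $\pi^{-1}(W)$: $W = \Zar(\pi(V))$ is an algebraic subvariety of $S$, but $\pi^{-1}(W)\subset \mathcal{D}$ is only a complex-analytic set, not obviously definable on all of $\mathcal{D}$. However, we only ever intersect with $\mathcal{F}$, and $\pi$ restricted to (a neighbourhood of) the Siegel set — equivalently, the uniformization restricted to $\mathcal{F}$ — is definable in $\mathbb{R}_{an,exp}$ by the work underlying the hyperbolic Ax–Lindemann theorem (this is exactly the definability of the uniformization map on a fundamental set, cf. \cite{KlinglerUllmoYafaev16}). Therefore $\mathcal{F}\cap \pi^{-1}(W) = (\pi|_{\mathcal{F}})^{-1}(W)$ is definable, and so is $g.V\cap \mathcal{F}\cap \pi^{-1}(W)$ as a definable family in $g$. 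Since dimension of the fibres of a definable family takes finitely many values on definable pieces, $\Sigma(W) = \{g : \dim(g.V\cap\mathcal{F}\cap\pi^{-1}(W)) = \dim V\}$ is definable in $\mathbb{R}_{an,exp}$.

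\textbf{$g\in\Sigma(W)\Rightarrow g.V\subseteq \pi^{-1}(W)$.} Here the key input is that $V$ is a connected component of $f(\mathbb{C}^m)\cap\mathcal{D}$ cut down to a ball, and $g.V$ is an irreducible analytic set of dimension $\dim V$ (as $g$ acts biholomorphically). If $\dim(g.V\cap \pi^{-1}(W)) = \dim V$, then $g.V\cap \pi^{-1}(W)$ contains an open subset of $g.V$; since $\pi^{-1}(W)$ is analytic and $g.V$ is irreducible, the identity principle forces $g.V\subseteq \pi^{-1}(W)$. (One must be slightly careful that $\Zar(\pi(V))$ being a finite union of components does not spoil irreducibility of $g.V$; but $g.V$ connected lands in a single component of $\pi^{-1}(W_j)$ for one component $W_j$ of $W$, which suffices.) Equivalently one notes $\pi(g.V)$ is contained in $W$ on a set of full dimension, hence — as $\pi(g.V)$ is itself irreducible analytic and $W$ is its own Zariski closure once we know the generic point lies in $W$ — everywhere.

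\textbf{The equality $\Sigma(W)\cap\Gamma = \Sigma'(W)\cap\Gamma$.} For $\gamma\in\Gamma$ we have $\pi\circ\gamma = \pi$, so $\pi(\gamma.V) = \pi(V)\subseteq W$ automatically, i.e. $\gamma.V\subseteq \pi^{-1}(W)$ for every $\gamma\in\Gamma$. Thus the condition $\dim(\gamma.V\cap\mathcal{F}\cap\pi^{-1}(W)) = \dim V$ reduces to $\dim(\gamma.V\cap\mathcal{F}) = \dim V$. Since $\mathcal{F}$ is a fundamental set, $\gamma.V$ meets $\mathcal{F}$ in a subset which, if nonempty, contains an open piece of $\gamma.V$ (because $\mathcal{F}$ contains an open set and $\gamma.V$ together with the $\Gamma$-translates covers $V$'s orbit), giving dimension $\dim V$; conversely if the dimension is $\dim V$ the intersection is certainly nonempty. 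This is precisely $\gamma\in\Sigma'(W)$, so the two sets agree on $\Gamma$. The one gap to fill carefully is why $\gamma.V\cap\mathcal{F}\neq\emptyset$ already implies the intersection has full dimension rather than lying in the boundary of $\mathcal{F}$; this follows because $V$ is $R_i$-bounded and $f(U\cap B(0,R_i))$ accumulates on $\partial X$, so one can choose $\mathcal{F}$ (replacing $\Sigma$ by a slightly larger Siegel set) with $\gamma.V$ meeting the interior — the standard manoeuvre in \cite{UllmoYafaev16, KlinglerUllmoYafaev16}.

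\textbf{Main obstacle.} The real difficulty is the definability of $\mathcal{F}\cap\pi^{-1}(W)$, i.e. of the uniformization map restricted to the fundamental set: this is what forces the structure $\mathbb{R}_{an,exp}$ rather than $\mathbb{R}_{an}$, and it is where the explicit Siegel–domain-of-the-third-kind description in Proposition \ref{prop:proposition 3.2} must be invoked to write $\pi|_{\mathcal{F}}$ coordinate-wise as (restricted analytic functions composed with) the exponential. Everything else is a routine application of the identity principle and basic properties of definable families.
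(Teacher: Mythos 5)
Your proof is correct and follows the same three-step approach as the paper's (definability via the definability of $\pi$ restricted to a fundamental set, analytic continuation for the inclusion $g.V\subseteq\pi^{-1}(W)$, and $\Gamma$-invariance of $\pi^{-1}(W)$ for the equality on $\Gamma$), merely spelling out in detail what the paper dispatches in three terse sentences with a footnote to \cite{KlinglerUllmoYafaev16}. The one place you could streamline is the final subtlety: the reason $\gamma.V\cap\mathcal{F}\neq\emptyset$ forces $\dim(\gamma.V\cap\mathcal{F})=\dim V$ is simply that $\mathcal{F}$ is covered by the \emph{open} sets $\Theta$ of Proposition~\ref{prop:proposition 3.2}, so a nonempty intersection with the analytic set $\gamma.V$ is open in $\gamma.V$ and hence of full dimension — no enlargement of the Siegel set or appeal to accumulation on $\partial X$ is needed.
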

\begin{proof}
	The set $\Sigma(W)$ is definable in $\mathbb{R}_{an,exp}$ because all sets and maps involved in its definition are\footnote{For the definablility of the uniformisation map see \cite[Section 4]{KlinglerUllmoYafaev16}.}. The second assertion follows by analytic continuation. Finally the equality
	\begin{equation}
		\Sigma(W)\cap \Gamma= \Sigma'(W)\cap \Gamma
	\end{equation}
	follows form the fact that $\pi^{-1}(W)$ is $\Gamma$-invariant.
\end{proof}

We now recall the Pila-Wilkie counting theorem.

\begin{theorem}[{\cite{PilaWilkie06}}]
	\label{thm:Pila-Wilkie}
	Let $S\subset \mathbb{R}^{n}$ be a set definable in the o-minimal structure $\mathbb{R}_{an,exp}$. Let $H: \mathbb{Q}^{n}\to \mathbb{R}$ be the usual multiplicative height and denote by $N_{S}(T)$ the number of rational points in $S$ of height at most $T$. If there exist constants $c,\varepsilon>0$ such that $N_{S}(T)>cT^{\varepsilon}$ for all $T$ sufficiently large, then $S$ contains a positive dimensional semialgebraic subset.
\end{theorem}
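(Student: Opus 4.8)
The assertion is the Pila--Wilkie counting theorem, and I would deduce it from the usual ``upper bound'' form: for every set $X\subseteq\mathbb{R}^{n}$ definable in $\mathbb{R}_{an,exp}$ (or in any o-minimal structure) and every $\varepsilon>0$ there is a constant $c(X,\varepsilon)$ with $N(X\setminus X^{\mathrm{alg}},T)\leq c(X,\varepsilon)T^{\varepsilon}$ for all $T\geq1$, where $X^{\mathrm{alg}}$ is the union of all connected positive-dimensional semialgebraic subsets of $X$. The statement as worded is the contrapositive: if $S$ contained no positive-dimensional semialgebraic subset then $S^{\mathrm{alg}}=\emptyset$, so $N_{S}(T)=N(S\setminus S^{\mathrm{alg}},T)\leq c(S,\varepsilon/2)T^{\varepsilon/2}$, contradicting $N_{S}(T)>cT^{\varepsilon}$ once $T$ is large. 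So the content is the upper bound, which I would prove by induction on $k=\dim X$, carried out simultaneously for definable \emph{families} $\{X_{y}\}$ with the constant uniform in the parameter $y$; this uniformity is exactly what lets the induction close up. The base case $k=0$ is immediate, since the fibres are finite of uniformly bounded cardinality by cell decomposition.

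For the inductive step the essential input is the o-minimal reparametrization theorem of Pila and Wilkie, the o-minimal analogue of the Yomdin--Gromov lemma: given $r\in\mathbb{N}$, the set $X$ is a finite union of images of definable maps $\phi:(0,1)^{k}\to\mathbb{R}^{n}$ all of whose partial derivatives of order $\leq r$ are bounded in sup-norm by $1$, uniformly over a definable family. It thus suffices to bound, for one such $\phi$, the number of rational points of height $\leq T$ in $\phi((0,1)^{k})$. Here I would run the Bombieri--Pila determinant method. Fix a degree $d=d(n,k,\varepsilon)$, let $D=\binom{n+d}{n}$ be the number of monomials of degree $\leq d$ in $n$ variables, and partition $(0,1)^{k}$ into $\asymp T^{k\delta}$ subcubes of side $T^{-\delta}$ with $\delta=\varepsilon/(2k)$. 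Group the preimages of the rational points of height $\leq T$ by subcube; for a block of $D$ such preimages lying in one subcube, the matrix obtained by evaluating the $D$ monomials of degree $\leq d$ at the corresponding $D$ rational points is either singular --- forcing the block onto a real algebraic hypersurface of degree $\leq d$ --- or nonsingular. In the nonsingular case its determinant $\Delta$ satisfies a lower bound $|\Delta|\geq T^{-A(n,d)}$, because clearing denominators turns $T^{A}\Delta$ into a nonzero integer; and, Taylor-expanding each $(\text{monomial})\circ\phi$ about the centre of the subcube and performing row reductions that realise divided differences of order up to $\sim r$, an upper bound $|\Delta|\leq c\,(T^{-\delta})^{e(r,d,k)}$ where the ``Taylor gain'' $e(r,d,k)$ grows without bound as $r\to\infty$ (like $r^{k+1}/k!$ against the $D\asymp d^{n}$ rows). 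Choosing $r=r(n,k,\varepsilon,d)$ large enough that $\delta\,e>A$, the nonsingular case is impossible for $T$ large, so every block lies on a hypersurface of degree $\leq d$; since there are only $\asymp T^{k\delta}\leq T^{\varepsilon/2}$ subcubes, all rational points of height $\leq T$ on $\phi((0,1)^{k})$ lie on a union of at most $c'T^{\varepsilon/2}$ real algebraic hypersurfaces of degree $\leq d$ (the number and the hypersurfaces depending on $T$).

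To conclude I would intersect with these hypersurfaces and induct on dimension. If $X$ is not contained in any degree-$\leq d$ hypersurface, then for each hypersurface $H$ above, $X\cap H$ is definable of dimension $<k$; applying the inductive hypothesis to the definable family $\{X\cap H\}$ indexed by the coefficients of $H$ gives $N\bigl((X\cap H)\setminus(X\cap H)^{\mathrm{alg}},T\bigr)\leq c''T^{\varepsilon/2}$ uniformly in $H$, and summing over the $\leq c'T^{\varepsilon/2}$ hypersurfaces yields $\leq c'c''T^{\varepsilon}$. A rational point of $X\setminus X^{\mathrm{alg}}$ lying on some $H$ is counted here, since it cannot lie on a positive-dimensional connected semialgebraic subset of $X\cap H\subseteq X$; hence $N(X\setminus X^{\mathrm{alg}},T)\leq c'c''T^{\varepsilon}$. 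The remaining possibility, that $X$ itself lies in a hypersurface of degree $\leq d$, is handled by replacing $X$ with such a hypersurface of minimal dimension and iterating the argument there --- a process that terminates because the dimension strictly drops. This completes the induction.

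The main obstacle is twofold. First, the reparametrization theorem is itself a substantial result, proved by a delicate induction on dimension using o-minimal cell decomposition together with estimates on the derivatives of definable functions near the frontier of a cell; I would invoke it as a black box rather than reprove it. Second, the genuine work in the determinant step is the interlocking choice of $\delta(\varepsilon,k)$, $d(\varepsilon,n,k)$ and $r(\varepsilon,n,k,d)$ and the verification that the Taylor gain $e(r,d,k)$ really does outgrow the denominator exponent $A(n,d)$ once $r$ is large --- everything else is bookkeeping, but this estimate is where the theorem lives.
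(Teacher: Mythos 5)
The paper offers no proof of this statement: it is quoted verbatim as a known theorem from \cite{PilaWilkie06}, and your outline is a faithful sketch of the standard argument in that reference --- the contrapositive reduction to the bound $N(X\setminus X^{\mathrm{alg}},T)\leq c(X,\varepsilon)T^{\varepsilon}$, followed by the o-minimal reparametrization theorem, the Bombieri--Pila determinant method on subcubes, and the dimension induction carried out uniformly over definable families. So your approach coincides with the one the paper implicitly relies on; the only caveat is that, as you acknowledge, the reparametrization theorem and the exponent bookkeeping in the determinant step are invoked rather than verified, which is appropriate for a result the paper itself treats as a black box.
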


Let 
	\begin{equation}
		N_{\Sigma(W)}(T)= \left\{ \gamma\in \Gamma \cap \Sigma(W) \mathop{|} H(\gamma)\leq T \right\}.
	\end{equation}
From theorem \ref{thm:intersection holomorphic curves and fundamental domains}, we see that $N_{\Sigma(W)}(T)\geq c_{1}T^{c_{2}}$, for some constants $c_{1},c_{2}>0$. Combining this with theorem \ref{thm:Pila-Wilkie}, we get a semialgebraic set $X\subset \Sigma(W)$. Finally from lemma \ref{lemma:Sigma} we get that $X . V\subset \pi^{-1}(W)$, thus proving theorem \ref{thm:thm 2.3}.


\printbibliography[heading=bibintoc]

\end{document}